\newcounter{citedtheorems}
\numberwithin{equation}{section}
\newtheorem{defn}{Definition}[section]
\newtheorem{theorem}[defn]{Theorem}
\newtheorem*{theorem-m}{Theorem \ref{t:rg}}
\newtheorem*{thm-m}{Main Theorem}
\newtheorem*{theorem-abs1}{Theorem \ref{ind-theorem}}
\newtheorem*{theorem-abs2}{Theorem \ref{a23}}
\newtheorem*{theorem-abs3}{Theorem \ref{ind-new}}
\newtheorem*{theorem-abs4}{Theorem \ref{m1}}
\newtheorem{thm-lit}[citedtheorems]{Theorem}
\newtheorem{defn-lit}[citedtheorems]{Definition}
\newtheorem{fact-lit}[citedtheorems]{Fact}
\newtheorem{fact}[defn]{Fact}
\newtheorem{cor}[defn]{Corollary}
\newtheorem{defn-claim}[defn]{Definition/Claim}
\newtheorem{conv}[defn]{Convention}
\newtheorem{claim}[defn]{Claim}
\newtheorem{obs}[defn]{Observation}
\newtheorem{rmk}[defn]{Remark}
\newtheorem{qst}[defn]{Question}
\newcommand{\los}{\L os }
\newcommand{\br}{\vspace{2mm}}
\newcommand{\step}{\br\noindent\emph}
\newcommand{\eff}{\mathcal{F}}
\newcommand{\gee}{\mathcal{G}}
\newcommand{\lp}{\langle}
\newcommand{\rp}{\rangle}
\newcommand{\kleq}{\trianglelefteq}
\newcommand{\rn}{\operatorname{Range}}
\newcommand{\dom}{\operatorname{Dom}}
\newcommand{\mct}{\mathcal{T}}
\newcommand{\de}{\mathcal{D}}
\newcommand{\fss}{{\mathcal{P}}_{\aleph_0}}
\newcommand{\trv}{\mathbf{t}} 
\newcommand{\jj}{\mathbf{j}}
\newcommand{\mcp}{\mathcal{P}}
\newcommand{\xpu}{{\overline{x}}_{\mcp(u)}}
\newcommand{\ba}{\mathfrak{B}}
\newcommand{\lao}{[\lambda]^{<\aleph_0}}
\newcommand{\lba}{\Lambda_{\ba, \overline{a}}}
\newcommand{\rstr}{\upharpoonright}
\newcommand{\vp}{\varphi}
\newcommand{\lcf}{\operatorname{lcf}}
\newcommand{\mcc}{\mathcal{C}}
\newcommand{\ma}{\mathbf{a}}
\newcommand{\mb}{\mathbf{b}}
\newcommand{\fin}{\operatorname{FI}}
\newcommand{\trg}{T_{\mathbf{rg}}}
\newcommand{\dm}{\operatorname{dom}}
\newcommand{\ap}{\operatorname{AP}}
\newcommand{\mca}{\mathcal{A}}
\newcommand{\fil}{\operatorname{fil}}
\newcommand{\xp}{\mathfrak{p}}
\newcommand{\xt}{\mathfrak{t}}
\newcommand{\tlf}{\trianglelefteq}
\title[Saturating the random graph...]{Saturating the random graph with an independent family of small range}
\author{M. Malliaris and S. Shelah}\thanks{\emph{Thanks:}
Malliaris was partially supported by NSF grant DMS-1001666 and by a G\"odel fellowship.
Shelah was partially supported by Israel Science Foundation grant 1053/11. 
This is paper number 1009 in Shelah's list of publications.}
\address{Department of Mathematics, University of Chicago, 5734 S. University Avenue, Chicago, IL 60637, USA and
Einstein Institute of Mathematics, Edmond J. Safra Campus, Givat Ram,
The Hebrew University of Jerusalem, Jerusalem, 91904, Israel}
\email{mem@math.uchicago.edu}
\address{Einstein Institute of Mathematics, Edmond J. Safra Campus, Givat Ram, The Hebrew
University of Jerusalem, Jerusalem, 91904, Israel, and Department of Mathematics,
Hill Center - Busch Campus, Rutgers, The State University of New Jersey, 110
Frelinghuysen Road, Piscataway, NJ 08854-8019 USA}
\email{shelah@math.huji.ac.il}
\urladdr{http://shelah.logic.at}
\begin{document}

\subjclass[2010]{Primary: 03C20, 03C45, Secondary: 03E05}

\keywords{unstable model theory, saturation of ultrapowers, Keisler's order}

\begin{abstract}
Motivated by Keisler's order, 
a far-reaching program of understanding basic model-theoretic structure through the lens
of regular ultrapowers, 
we prove that for a class of regular filters $\de$ on $I$, $|I| = \lambda > \aleph_0$, 
the fact that $\mcp(I)/\de$ has little freedom (as measured by the fact that any maximal antichain is of 
size $<\lambda$, or even countable) 
does not prevent extending $\de$ to an ultrafilter $\de_1$ on $I$ which saturates ultrapowers of the random graph.
``Saturates'' means that $M^I/\de_1$ is $\lambda^+$-saturated whenever $M \models \trg$.
This was known to be true for stable theories, and false for non-simple and non-low theories. This result and the techniques
introduced in the proof have catalyzed the authors' subsequent work on Keisler's order for simple unstable theories. 
The introduction, which includes a part written for model theorists and a part written for set theorists, discusses our current program and related results. 
\end{abstract}

\maketitle

\hfill \emph{Dedicated to Jouko V\"{a}\"{a}n\"{a}nen on the occasion of his 60th birthday.}

\br

\section{Introduction} 

Keisler's order is a long-standing (and far-reaching) program for comparing the complexity of unstable theories, 
proposed in Keisler 1967 \cite{keisler}. 
The measure of complexity, roughly speaking, is the relative difficulty of producing saturated regular ultrapowers. 
The order $\tlf$ already has significant connections to classification theory, and we believe the present investigations
will shed further light on the structure of simple unstable theories. 
In \S \ref{intro:order} below, we define the order and survey what was known.  
In \S \ref{intro:pres} we present this paper's main result, Theorem \ref{t:rg}, and explain the relevance for simple theories.  
In \S \ref{intro:tr}, we describe the known points of contact between regular ultrafilters and theories. 
In \S \ref{intro:st}, we discuss several theorems of set theory which have come from this program. 

\subsection{Keisler's order and model theory} \label{intro:order} 
This subsection, written primarily for model theorists, aims to explain Keisler's order and our recent work.

For transparency, all languages are countable and all theories are complete. 
We say that the ultrapower $M^\lambda/\de$
is \emph{regular} when $\de$ is a regular ultrafilter on $\lambda$, Definition \ref{regular} below.
A key property of regularity is that:
\begin{fact} \label{eq:fact}
Let $M \equiv N$ in a countable signature, $\lambda \geq \aleph_0$, $\de$ a regular ultrafilter on $\lambda$.
Then $M^\lambda/\de$ is $\lambda^+$-saturated if and only if $N^\lambda/\de$ is $\lambda^+$-saturated.
\end{fact}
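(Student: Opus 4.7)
The plan is to recast $\lambda^+$-saturation of a regular ultrapower as a combinatorial property of $(\de, \operatorname{Th}(M))$ that is visibly independent of the choice of $M$ within its theory. Fix a regularizing family $\{X_\alpha : \alpha < \lambda\} \subseteq \de$ --- one in which each $t < \lambda$ lies in only finitely many $X_\alpha$ --- which exists by regularity of $\de$ and converts global type data into local combinatorial data on the index set.

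Given a partial type $p = \{\varphi_\alpha(x, \bar{a}_\alpha) : \alpha < \lambda\}$ over at most $\lambda$ parameters in $M^\lambda/\de$, for each finite $w \subseteq \lambda$ I form the \Los-set
$$A_w := \{\, t < \lambda : M \models \exists x \textstyle\bigwedge_{\alpha \in w} \varphi_\alpha(x, \bar{a}_\alpha[t])\,\}.$$
By \Los's theorem, consistency of $p$ is equivalent to $A_w \in \de$ for every finite $w$. The heart of the matter is to prove that $p$ is \emph{realized} in $M^\lambda/\de$ iff the monotone map $w \mapsto A_w$ admits a \emph{multiplicative refinement}: a monotone $w \mapsto B_w$ in $\de$ with $B_w \subseteq A_w \cap X_\alpha$ for each $\alpha \in w$ and $B_w \cap B_{w'} = B_{w \cup w'}$. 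One direction uses regularity: at each coordinate $t$ only finitely many of the $B_{\{\alpha\}}$ contain $t$, and picking a witness in $M$ for the corresponding finite conjunction (which exists because $B_{w(t)} \subseteq A_{w(t)}$) assembles, via \Los, into a realization. The converse is routine: a realization $c$ produces a multiplicative refinement via the sets on which its representatives satisfy the relevant conjunctions.

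Finally, I would observe that the existence of such multiplicative refinements depends only on $\de$ and on which finite conjunctions of formulas $\varphi_\alpha(x, \bar{y}_\alpha)$ are satisfiable --- a first-order matter preserved by $M \equiv N$. Hence $\de$ saturates $M^\lambda/\de$ iff it saturates $N^\lambda/\de$. The main technical obstacle is the forward direction of the multiplicative-refinement characterization: upgrading saturation into refinement data typically requires a careful transfinite construction along a well-ordering of $[\lambda]^{<\aleph_0}$, at each step choosing the new $B_w$ to be compatible with all previously specified intersections. Once that lemma is in hand, the transfer between $M$ and $N$ is formal.
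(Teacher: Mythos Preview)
The paper states this as a classical fact without proof; it is essentially due to Keisler and appears in Shelah's \emph{Classification Theory}, Chapter~VI. The paper's only gesture toward an argument is Remark~\ref{r:good} in the Appendix, which records exactly the multiplicative-refinement characterization of realization that you build your proof around. So your overall strategy is the standard one and is consistent with what the paper invokes.

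Two remarks on the execution. First, the transfer step is slightly less ``formal'' than you suggest: given a type $q$ over parameters $\bar b_\beta$ in $N^\lambda/\de$, one must actually \emph{construct} parameters $\bar a_\beta \in {^I M}$ so that the resulting type in $M^\lambda/\de$ has the same \Los map modulo $\de$. This is done coordinatewise: regularity ensures that at each $t$ only finitely many $\beta$ lie in $w(t) = \{\beta : t \in X_\beta\}$, and $M \equiv N$ then guarantees a tuple in $M$ realizing the same finitary type as $(\bar b_\beta[t])_{\beta \in w(t)}$. Countability of the signature is what keeps this data first-order. You gesture at this with ``which finite conjunctions are satisfiable,'' but the construction of the matching parameters deserves to be made explicit.

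Second, your final paragraph misidentifies the main obstacle. There is no transfinite construction along a well-ordering of $[\lambda]^{<\aleph_0}$ anywhere in this proof. The direction ``realization $\Rightarrow$ multiplicative refinement'' is, as you yourself say earlier, routine: if $c$ realizes $p$, take $B_w = \{t : M \models \bigwedge_{\alpha \in w} \varphi_\alpha(c[t], \bar a_\alpha[t])\}$. The inductive construction you describe sounds like the existence proof for good ultrafilters, which is a different and harder statement. The genuine content of Fact~\ref{eq:fact} is exactly your second paragraph together with the parameter-transfer step above; once those are in place the equivalence is immediate.
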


\subsubsection{What is Keisler's order and why is it model-theoretically interesting?}.

Questions of saturation have long been central to model theory. Morley's theorem is a fundamental example.
Suppose we anachronistically define the \emph{\los order} on complete countable theories:

\begin{itemize}
\item $T_1 \tlf^{L}_\lambda T_2$ if 
\begin{itemize}
\item (for all $M_2 \models T_2$, $|M_2| = \lambda$, $M_2$ is saturated) implies
\item (for all $M_1 \models T_1$, $|M_1| = \lambda$, $M_1$ is saturated)
\end{itemize}
\item $T_1 \tlf^{L} T_2$ if $T_1 \tlf^{L}_\lambda T_2$ for all $\lambda > \aleph_0$ 
\end{itemize}

Then Morley's theorem shows that $\tlf^L$ has a minimum class, a maximum class, and no other classes, i.e.
either all uncountable models of some given countable theory are saturated, or else the theory has some
unsaturated model of every uncountable size. 

Keisler's order may be thought of as generalizing this hypothetical ``\los order'' in the following powerful way:
rather than considering all uncountable models, we consider only regular ultrapowers (so ``saturated'' becomes ``$\lambda^+$-saturated''). 
This reveals a richer field of comparison: we compare not by cardinality, but by provenance. Each $M_1^\lambda/\de \models T_1$ 
is naturally compared to models $M_2^\lambda/\de$ of $T_2$, built using the same ultrafilter $\de$ on $\lambda$.
More precisely:

\begin{defn} \label{keisler-order} \emph{(Keisler \cite{keisler})} 
Let $T_1, T_2$ be complete countable first-order theories. 
\begin{enumerate}
 \item Let $\de$ be a regular ultrafilter on $\lambda$. Write \emph{$T_1 \kleq_\de T_2$} 
 when (for all $M_2 \models T_2$, $M^{\lambda}_2/\de$
is $\lambda^+$-saturated) implies (for all $M_1 \models T_1$, $M^{\lambda}_1/\de$ is $\lambda^+$-saturated). 
\item Write $T_1 \tlf_\lambda T_2$ if for any regular ultrafilter $\de$ on $\lambda$, 
 $T_1 \tlf_\de T_2$.  
\item \emph{(Keisler's order)} 
Write $T_1 \kleq T_2$ if for all infinite $\lambda$, $T_1 \kleq_\lambda T_2$.
\end{enumerate}
\end{defn}

By Fact \ref{eq:fact}, $\tlf$ is a pre-order on theories, usually thought of as a partial order on the equivalence classes. 
[Note that by \ref{eq:fact}, $T_1 \tlf_\de T_2$ is equivalent to ``for all $M_1 \models T_1$, $M_2 \models T_2$,
($M^{\lambda}_2/\de$ is $\lambda^+$-saturated) implies ( $M^{\lambda}_1/\de$ is $\lambda^+$-saturated).''] 

Keisler proved that $\tlf$ had at least a minimum and maximum class, and asked:

\begin{qst} \label{q:k} \emph{(Keisler 1967)}
Determine the structure of Keisler's order. 
\end{qst}

Surprising early work of Shelah established the model theoretic significance of $\tlf$ (note the independent appearance of
dividing lines from classification theory).

\begin{thm-lit} \emph{(Shelah 1978 \cite{Sh:a} Chapter VI)} \label{thm:s}
\[ \mct_1 < \mct_2 < \cdots \cdots ? \cdots \cdots \leq \mct_{max} \]
where $\mct_1 \cup \mct_2$ is precisely the class of countable stable theories, and:
\begin{enumerate}
\item $T_1$, the minimum class, is the set of all $T$ without the finite cover property. 
\item $T_2$, the next largest class, is the set of all stable $T$ with the f.c.p.
 \item There is a maximum class $\mct_{max}$, containing all linear orders (i.e. SOP, in fact $SOP_3$ \cite{Sh500}), 
 however, its model-theoretic identity is not known. 
\end{enumerate}
\end{thm-lit}

For many years there was little progress, and the unstable case appeared relatively intractable despite the
flourishing of unstable model theory. 

Very recently, work of Malliaris and Shelah has led to considerable advances in our
understandings of how ultrafilters and theories interact (Malliaris \cite{mm-thesis}-\cite{mm5}, Malliaris and Shelah \cite{MiSh:996}-\cite{MiSh:999}).
For an account of some main developments, see the introductory sections of \cite{MiSh:996} as well as \cite{MiSh:999}. 
Most of the seismic shifts are underground and not yet visible as divisions in the order. Still, one can update the diagram:
 \[ \mct_1 < \mct_2 < \mct_{x}  \cdots ? \cdots < \cdots ? \cdots \mct_{nl} \leq \cdots ? \cdots \leq \mct_{max} \]
$\mct_x$ denotes the minimum unstable class (not yet characterized, but contains the random graph). $\mct_{nl}$ denotes
the simple non low theories, which are not known to be an equivalence class but are all strictly below $\mct_x$. A more 
finely drawn diagram would also show that there is a minimum $TP_2$ theory, 
and that at limit $\lambda$ $SOP_2$ suffices for maximality, see \S \ref{intro:st}. 
Note that it is not yet known whether the order is finite, or linear.

\subsection{Prolegomena to our main theorem} \label{intro:pres}
Earlier this year, building on our prior work described above, 
we had shown that the question of saturation of ultrapowers could be substantially recast
in terms of a two-stage approach, involving a more set-theoretic stage [constructing a so-called \emph{excellent} filter $\de$ on $I$ 
admitting a surjective homomorphism $\jj: \mcp(I) \rightarrow \ba$ onto a specified Boolean algebra, with $\jj^{-1}(\{ 1_\ba \}) = \de$]
followed by a more model-theoretic stage [constructing a so-called \emph{moral} ultrafilter on the Boolean algebra $\ba$, a step defined
in terms of ``possibility patterns'' of formula incidence represented in the theory]. 

This advance allowed us to give the
first ZFC $\tlf$-dividing line among the unstable theories, and the first dividing line since 1978: separating the Keisler-minimum unstable theory, 
the random graph $\trg$, from all non-low or non-simple theories.

\begin{thm-lit} \emph{(Malliaris and Shelah \cite{MiSh:999})} \label{999-theorem}
Suppose $\lambda, \mu$ are given with $\mu < \lambda \leq 2^\mu$. Then 
there is a regular ultrafilter $\de$ on $\lambda$ which saturates ultrapowers of
all countable stable theories and of the random graph, but fails to saturate ultrapowers of any non-low or non-simple theory. 
\end{thm-lit}

\begin{conv}
We say that a regular ultrafilter $\de_1$ on $\lambda$ ``saturates ultrapowers of $T$'' to mean that whenever $M \models T$, $M^\lambda/\de$ 
is $\lambda^+$-saturated.
\end{conv}

Naturally this result raised many further questions. Chief among them was the role of $\mu < \lambda \leq 2^\mu$, which appeared to have a strong
model-theoretic motivation by analogy to \cite{Sh93}, 
and in terms of what might be called the ``Engelking-Kar\l owicz property of the random graph:'' in the monster model of the
random graph, if $|B| = \lambda$ and $\mu < \lambda \leq 2^\mu$ then there is some set $A$ such that every nonalgebraic $p \in S(B)$ can be
finitely realized in $A$. 
The role of $\mu$ in the proof of Theorem \ref{999-theorem} was as the size of a maximal antichain of a quotient Boolean algebra 
at the key transfer point in the inductive ultrafilter construction,
i.e., the point where the excellent filter $\de$ has been built. 
When $\mu < \lambda$, this ``lack of freedom'' at the transfer point was sufficient to guarantee the non-saturation result for any 
ultrafilter $\de_1 \supseteq \de$, and as indicated, when $\lambda \leq 2^\mu$, 
model-theoretic considerations made it possible to guarantee saturation of the random graph for \emph{some} $\de_1 \supseteq \de$.  

In this paper we show that, contrary to initial expectations, the situation here is fundamentally different and
the second restriction on $\mu$ is unnecessary. 
Rather it is, within the regime of so-called excellent filters on some arbitrary but fixed $\lambda$, 
\emph{always} possible to extend to an ultrafilter which will saturate the random graph,
even when $\mu = \aleph_0$ at the transfer point: 

\begin{thm-m}
Let $\aleph_0 \leq \mu \leq \lambda = |I|$. Suppose that $\gee_*$ and $\de$ satisfy: 
\begin{enumerate}
\item $\de$ is a regular, excellent filter on $I$
\item $\gee_* \subseteq {^\lambda\mu}$, $|\gee_*| = 2^\lambda$ 
\item $\gee_*$ is a $\de$-independent family of functions
\item $\de$ is maximal subject to (3)
\end{enumerate}
Then for some ultrafilter $\de_1 \supseteq \de$, and all $M \models \trg$, $M^I/\de_1$ is $\lambda^+$-saturated.
\end{thm-m}

Moreover, the proof itself modifies the usual inductive construction of an ultrafilter by means of independent functions by introducing
so-called approximations; this has been significant for our subsequent work.

Note that when $(I, \de, \gee)$ is $(\lambda, \aleph_0)$-good it is always
possible to extend $\de$ to an ultrafilter $\de_1$ which saturates all stable theories. [It suffices to ensure that 
the coinitiality of $\omega$ in $(\omega, <)^I/\de_1$ is $\geq \lambda^+$.] Our theorem here shows that \emph{if} $\de$ is excellent this
is additionally always possible for the random graph.  
It complements our construction in \cite{MiSh:997} of a filter $\de$, necessarily not excellent, \emph{no}
extension of which is able to saturate the random graph.

\br
\subsubsection{What avenues of investigation does this suggest for simple theories?}
The parameter $\mu$ measuring the freedom available in the underlying Boolean algebra during ultrafilter construction, 
i.e. $\mu$ in Definition \ref{good-triples} below, appears significant for ``outside definitions'' of simple theories. 
Given Theorem \ref{t:rg}, the natural question is whether 
membership in the Keisler minimum unstable class is characterized by: 
whenever $\de$ is regular and excellent and $(I, \de, \gee)$ a $(\lambda, \aleph_0)$-good triple, some ultrafilter $\de_1 \supseteq \de$ 
saturates $T$.
By our prior results, $\mu < \lambda$ blocks saturation of non-simple or non-low theories (this can be circumvented
by introducing a complete filter on the quotient Boolean algebra). Thus, the current focus is simple low theories. 

By increasing the range of possible $\mu$, Theorem \ref{t:rg} opens the door to a stratification of simple low theories.
That is, we will want to try to distinguish between classes of theories based on various cardinal invariants of the Boolean algebras
$\mcp(I)/\de$. Our previous result, Theorem \ref{999-theorem} above,
distinguished between the random graph (suffices to have antichains of size $\mu$) and non-low theories (necessary to have antichains of size $\lambda$) when $\mu < \lambda \leq 2^\mu$. So to see finer divisions in the unstable low theories,
one might need to assume failures of GCH. In light of Theorem \ref{t:rg} this is no longer necessary. 

The property $\mu = \aleph_0$ appears connected with a model-theoretic coloring property introduced in 
work in progress of the authors. 

\subsection{Translations between set theory and model theory} \label{intro:tr}
A fundamental part of investigating Keisler's order involves construction of ultrafilters, thus combinatorial
set theory. Isolating ``model-theoretically meaningful properties'' and determining implications and nonimplications
between them gives a useful perspective on ultrafilters. We now include two ``translation'' theorems from \cite{MiSh:996}.
(Some definitions are given in the Appendix.)

\begin{thm-lit} \emph{(Malliaris and Shelah \cite{MiSh:996} Theorem F)} \label{formula-corr}
In the following table, for each of the rows \emph{(1),(3),(5),(6)} the regular ultrafilter $\de$ on $\lambda$ fails to have 
the property in the left column if and only if it omits a type in every formula with the property in the right column.
For rows \emph{(2)} and \emph{(4)}, $\de$ fails to have the property on the left then it omits a type 
in every formula with the property on the right.

\br
\begin{tabular}{lcl}
\textbf{Set theory:} & & \textbf{Model theory:} \\
\br \textbf{properties of filters} & & \textbf{properties of formulas} \\
 \emph{(1)} $\mu(\de) \geq \lambda^+$ & & A. finite cover property \\
 \emph{(2)} $\lcf(\aleph_0, \de) \geq \lambda^+$ & ** & B. order property \\
 \emph{(3)} saturates $T_{rg}$ & & C. independence property \\
 \emph{(4)} flexible, i.e. $\lambda$-flexible	& ** & D. non-low \\
 \emph{(5)} good for equality & & E. $TP_2$ \\
 \emph{(6)} good, i.e. $\lambda^+$-good&  & F. strict order property \\ 
\end{tabular}
\end{thm-lit}

\begin{proof} The characterization of the maximum class via good ultrafilters and the definition of
the f.c.p. are due to Keisler 1967 \cite{keisler}, see \cite{MiSh:996} for details. 
(1)-(2) Shelah 1978 \cite{Sh:a} VI.5. (3) Straightforward by quantifier elimination. 
(4) Malliaris 2009 \cite{mm-thesis}. (5) Malliaris 2010 \cite{mm4}. (6) Shelah 1978 \cite{Sh:a} VI.2.6.
\end{proof}

\begin{thm-lit} \emph{(updated version of Malliaris and Shelah \cite{MiSh:996} Theorem 4.2)} \label{thm:imp}
Assume that $\de$ is a regular ultrafilter on $\lambda$ \emph{(}note that not all of these properties
imply regularity\emph{)}. Then:

$(1) \leftarrow (2) \leftarrow (3) \leftarrow (5) \leftarrow (6)$, with $(1) \not\rightarrow (2)$,  
$(2) \not\rightarrow (3)$,
$(3) \not\rightarrow (5)$, and whether $(5)$ implies $(6)$ is open.
Moreover $(1) \leftarrow (4) \leftarrow (5) \leftarrow (6)$, where $(3) \not\rightarrow (4)$ thus $(2) \not\rightarrow (4)$, $(4) \not\rightarrow (3)$, consistently $(4) \not \rightarrow (5)$, consistently $(4) \not\rightarrow (6)$; and $(4)$ implies $(2)$ is open.
\end{thm-lit}

``Consistently'' throughout Theorem \ref{thm:imp} means assuming a measurable cardinal. One of the surprises of \cite{MiSh:996}-\cite{MiSh:997} was the relevance
of measurable cardinals in constructing regular ultrafilters. This bridges a certain cultural gap between regular filters, typically
used in model theory, and complete ultrafilters, used primarily in set theory.
\S \ref{intro:st} gives some consequences.

\br

\subsection{Set-theoretic theorems and aspects of this program} \label{intro:st}
We now discuss some set-theoretic results of our program, from \cite{MiSh:996} and \cite{MiSh:999}.

First, ``consistently $(4) \not\rightarrow (6)$'' in Theorem \ref{thm:imp} above addressed a question raised in Dow 1975 \cite{dow} 
about whether OK filters (introduced by Kunen, Keisler) are necessarily good. We proved that consistently the gap may be arbitrarily large:

\begin{thm-lit} \emph{(Malliaris and Shelah \cite{MiSh:996} Theorem 6.4)} 
Assume $\kappa > \aleph_0$ is measurable and $2^\kappa \leq \lambda = \lambda^\kappa$.
Then there exists a regular uniform ultrafilter $\de$ on $\lambda$ such that $\de$ is $\lambda$-flexible, thus $\lambda$-OK, 
but not $(2^\kappa)^+$-good.
\end{thm-lit}

Notably, the failure of goodness is ``as strong as possible'' given the construction: 
$\de$ will fail to $(2^\kappa)^+$ saturate the random graph, thus any unstable theory. 
See \cite{MiSh:996} \S 1.2 for an account of this result.
As explained there, this is a natural context in which to study further weakenings of goodness. 
The two papers \cite{MiSh:996}-\cite{MiSh:997} contain several ultrafilter existence theorems.

We now describe a program from our paper \cite{MiSh:998}.  Because of the Keisler-maximality of linear order, but
also for set theoretic reasons (e.g. cardinal invariants of the continuum), it is natural to study
cuts by defining:

\begin{defn} \emph{(\cite{MiSh:998})}
Let $\de$ be a regular ultrafilter on $I$, $|I| = \lambda$. 
Let $\mcc(\de)$ be the set of all $(\kappa_1, \kappa_2)$ such that
$\kappa_1, \kappa_2 \leq \lambda$ are regular and $(\mathbb{N},<)^I/\de$ has a $(\kappa_1, \kappa_2)$-cut. 
\end{defn}

\begin{defn} \emph{(\cite{MiSh:998})}
Say that a regular ultrafilter $\de$ on $\lambda$ has $\lambda^+$-treetops if in any
tree $(T, \leq_T)$ definable in a $\de$-ultrapower, every $\leq_T$-increasing sequence of length $\leq \lambda$
has a $\leq_T$-upper bound.
\end{defn}

Both model-theoretic and set-theoretic considerations pointed to the question of whether
$\lambda^+$-treetops implies $\lambda^+$-good.

We had shown that an ultrafilter $\de$ on $\lambda$ has $\lambda^+$-treetops
if and only if $\mcc(\de)$ has no symmetric cuts, so the question was:

\begin{qst}
Given $\kappa \leq \lambda \implies (\kappa, \kappa) \notin \mcc(\de)$, what are the possible $\mcc(\de)$?
\end{qst}

The solution is the best possible:

\begin{thm-lit} \emph{(Malliaris and Shelah \cite{MiSh:998})}
Suppose that for all $\kappa \leq \lambda$, $(\kappa, \kappa) \notin \mcc(\de)$. 
Then $\mcc(\de) = \emptyset$, so $\de$ is good.
\end{thm-lit}

The framework of \cite{MiSh:998} is somewhat more general; as a result, we obtain two corollaries of this theorem.
First, $SOP_2$ is maximal in Keisler's order. Second, $\xp = \xt$, where $\xp, \xt$ are 
the pseudointersection number and the tower number respectively; this solved the oldest problem on 
cardinal invariants of the continuum.

This concludes the introduction. We now work towards Theorem \ref{t:rg}.

\section{Preliminaries}

Here we define: regular filters, excellent filters, and good triples. 


\begin{defn} \label{regular} \emph{(Regular filters)}
Let $\de$ be a filter on an index set $I$ of cardinality $\lambda$. 
A $\mu$-regularizing family $\{ X_i : i < \mu \}$ is a set such that:
\begin{itemize}
 \item for each $i<\mu$, $X_i \in \de$, and
 \item for any infinite $\sigma \subset \mu$, we have $\bigcap_{i \in\sigma} X_i = \emptyset$
\end{itemize}
Equivalently, for any element $t \in I$, $t$ belongs to only finitely many of the sets $X_i$. 
A filter $\de$ on an index set $I$ of cardinality $\lambda$ is said to be \emph{$\mu$-regular} if it contains
a $\mu$-regularizing family. $\de$ is called \emph{regular} if it is $\lambda$-regular, i.e. $|I|$-regular. 
\end{defn}

Regular filters on $\lambda$ always exist, see \cite{ck73} or \cite{MiSh:999} top of p. 7.

The proof of Theorem \ref{t:rg} builds on a main innovation of Malliaris and Shelah \cite{MiSh:999}, so-called excellent filters: 
\ref{d:excellent} below. Suppose we are given a filter $\de$, a Boolean algebra $\ba$ 
and a surjective homomorphism $\jj: \mcp(I)\rightarrow \ba$ such that $\jj^{-1}(\{ 1_\ba \} ) = \de$. 
Roughly speaking,
if $\de$ is excellent we may assume that $\jj$ is ``accurate'' in the sense that 
if certain distinguished Boolean terms hold on elements $\overline{\mb}$ of $\ba$,
the same terms will hold on some $\jj$-preimage of $\overline{\mb}$ \emph{without} the a priori necessary qualifier ``$\mod \de$.''
[For details and history, see \cite{MiSh:999} \S 2.]
The main example for us here is:

\begin{fact} \label{e:fact} Let $\lambda \geq \aleph_0$ and
let $\de$ be an excellent, i.e. $\lambda^+$-excellent filter on $I$, $|I| = \lambda$. Let 
\[ \overline{A} = \langle A_u : u \in [\lambda]^{<\aleph_0} \rangle \]
be a sequence of elements of $\mcp(I)$ which is multiplicative $\mod \de$, i.e. for each $u, v \in \lao$,
$A_u \cap A_v = A_{u \cup v} \mod \de$.

Then there exists $\overline{B} = \langle B_u : u \in \lao \rangle$ such that:
\begin{enumerate}
\item $u \in \lao \implies B_u \subseteq A_u$
\item $u \in \lao \implies B_u = A_u \mod \de$
\item $u, v \in \lao \implies B_u \cap B_v = B_{u \cup v}$
\end{enumerate}
i.e. $\overline{B}$ refines $\overline{A}$ and is truly multiplicative. 
\end{fact}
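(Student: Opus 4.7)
The plan is to view this as a direct application of the defining property of excellence as developed in \cite{MiSh:999}. Set $\ba = \mcp(I)/\de$ and let $\jj: \mcp(I) \to \ba$ denote the canonical surjective homomorphism, so that $\jj^{-1}(\{1_\ba\}) = \de$. The hypothesis that $\overline{A}$ is multiplicative modulo $\de$ translates to the exact statement that the images $\mb_u := \jj(A_u)$ form a multiplicative family in $\ba$: for every $u, v \in \lao$ the identity $\mb_u \cap \mb_v = \mb_{u \cup v}$ holds on the nose in $\ba$, without any residual qualifier.

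First I would recall the operative content of $\lambda^+$-excellence: given a sequence $\langle \mb_u : u \in \lao \rangle$ of elements of $\ba$ together with the countable family of finitary Boolean terms witnessing relations among finitely many of them at a time, one may choose preimages $B_u \in \jj^{-1}(\{\mb_u\})$ so that each such term becomes a genuine equality in $\mcp(I)$, not merely an equality modulo $\de$. This is precisely the ``accuracy'' of the homomorphism $\jj$ that excellence is designed to deliver. Next I would apply this to our specific situation: take as target sequence the elements $\mb_u$ above and as Boolean terms the pair-relations $\mb_u \cap \mb_v = \mb_{u \cup v}$ for $u, v \in \lao$. This yields a sequence $\overline{B} = \langle B_u : u \in \lao \rangle$ in $\mcp(I)$ with $\jj(B_u) = \mb_u$, hence $B_u = A_u \mod \de$, and with $B_u \cap B_v = B_{u \cup v}$ holding exactly. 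To arrange the refinement condition $B_u \subseteq A_u$ of item (1), I would either build this in when choosing the preimages (by restricting the selection to subsets of $A_u$, which is compatible with $B_u = A_u \mod \de$) or, equivalently, observe that excellence permits one to specify the preimage family up to sets of measure one, so starting from the canonical choice $\{A_u\}$ one may require $B_u \subseteq A_u$ throughout.

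The only substantive point to check is that our situation lies within the scope of the definition of excellence: the sequence is indexed by $\lao$, a set of cardinality $\lambda$, the Boolean terms involved are finitary (each involves only the two indices $u$ and $v$), and the filter is $\lambda^+$-excellent. This is exactly the regime for which the notion was formulated in \cite{MiSh:999} \S 2, so I anticipate the main ``obstacle'' to be notational rather than mathematical: translating the general lifting statement of excellence into the multiplicativity-preserving statement (1)–(3). For a full formal development of the lifting mechanism and its history, I would refer the reader to \cite{MiSh:999}.
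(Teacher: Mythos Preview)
Your proposal is correct and matches the paper's approach: the paper's proof is simply the citation ``\cite{MiSh:999} Claim 4.9,'' and your sketch is a faithful unpacking of why excellence (Definition~\ref{d:excellent}) applies here. One small clarification: you treat the refinement condition $B_u \subseteq A_u$ as something to be arranged separately, but in the paper's Definition~\ref{d:excellent} this is already clause~(1) of the conclusion, so it comes for free once you verify that the multiplicativity terms lie in $\Lambda_{\de,\overline{A}|_u}$ (which they do, by the case analysis on the ``zeroing'' sets $w$ in Definition~\ref{c:near}).
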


\begin{proof}
\cite{MiSh:999} Claim 4.9.
\end{proof}

The effect of excellence is to allow a so-called ```separation of variables,'' \cite{MiSh:999} Theorem 5.11.  
That is, as sketched in \ref{intro:pres}, 
ultrafilter construction can now be done in two stages. First, one builds an excellent filter with a specified quotient $\ba$. 
One can then build ultrafilters directly on $\ba$ which ensure that so-called possibility patterns of a given theory
(a measure of the complexity of incidence in $\vp$-types) have multiplicative refinements. An immediate advantage of this
separation is that it allows us to prove results like Theorem \ref{999-theorem} above. Namely, a ``bottleneck'' is built in to the
construction by arranging for the c.c. of $\mcp(I)/\de$ to be small by the time $\de$ is excellent. This prevents
future saturation of some theories, but not all, and so gives a dividing line.
This analysis also pushes one to understand how these ``possibility patterns'' reflect model-theoretic complexity. 

Though we include the full definition of excellence from \cite{MiSh:999}, 
Theorem \ref{t:rg} will only use Fact \ref{e:fact} above, which the reader may prefer to take as axiomatic.

\begin{defn} \label{c:near}
Let $\ba$ be a Boolean algebra and $\overline{\ma} = \langle \ma_u : u \in \lao \rangle$ be a sequence of 
elements of $\ba$. When $u$ is a finite set, write $\overline{x}_{\mcp(u)} = \langle x_v : v \subseteq u \rangle$
for a sequence of variables indexed by subsets of $u$.
\begin{enumerate}
\item Define 
\begin{align*} 
 N(\overline{a}{\rstr}_{\mcp(u)}) = & \{ \langle a^\prime_v : v \subseteq u \rangle  : ~\mbox{for some $w \subseteq u$}  \\
  & \mbox{we have $a^\prime_v = a_v$ if $v \subseteq w$ and $a^\prime_v = 0_\ba$ otherwise} \} \\
\end{align*}
\item Define $\lba$ to be the set 
\begin{align*}
\{ \sigma(\xpu) : \sigma(\xpu) & ~\mbox{is a Boolean term such that}  \\
& \mbox{ $\ba \models$ ``$\sigma(\overline{a}^\prime) = 0$'' whenever $\overline{a}^\prime \in N(\overline{a})$ }\} 
\end{align*}
\item If $\de$ is a filter on $\ba$ then 
$\Lambda_{\ba, \de, \overline{a}} = \Lambda_{\ba_1, \overline{a}_1}$ where $\ba_1 = \ba/\de$ and 
$\overline{a}_1 = \langle a_v/\de : v \subseteq u \rangle$. 
\br
\item If $\de$ is a filter on a set $I$, then $\de$ determines $I$, so we write
$\Lambda_{\de, \overline{a}}$ for $\Lambda_{\mcp(I), \de, \overline{a}}$.
\end{enumerate}
\end{defn}

\begin{defn} \emph{(Excellent filters, Malliaris and Shelah \cite{MiSh:999} Definition 4.6)} \label{d:excellent} 
Let $\de$ be a filter on the index set $I$. We say that $\de$ is \emph{$\lambda^+$-excellent}
when: if $\overline{A} = \langle A_u : u \in \lao \rangle$ with $u \in \lao \implies A_u \subseteq I$, then
we can find $\overline{B} = \langle B_u : u \in \lao \rangle$ such that:
\begin{enumerate}
\item for each $u \in \lao$, $B_u \subseteq A_u$
\item for each $u \in \lao$, $B_u = A_u \mod \de$
\item \emph{if} $u \in \lao$ and $\sigma \in \Lambda_{\de, \overline{A}|_u}$, so $\sigma(\overline{A}|_{\mcp(u)}) = \emptyset \mod \de$, 
\\ \emph{then} $\sigma(\overline{B}|_{\mcp(u)}) = \emptyset$
\end{enumerate}
We say that $\de$ is $\xi$-excellent when it is $\lambda^+$-excellent for every $\lambda < \xi$. 
\end{defn}


\begin{defn}
Given a filter $\de$ on $\lambda$, we say that a family $\eff$ of functions from $\lambda$ into $\lambda$ is
\emph{independent $\mod \de$} if for every $n<\omega$, distinct $f_0, \dots f_{n-1}$ from $\eff$ and choice of $j_\ell \in \rn(f_\ell)$,
\[ \{ \eta < \lambda ~:~ \mbox{for every $i < n, f_i(\eta) = j_i$} \} \neq \emptyset ~~~ \operatorname{mod} \de \]
\end{defn}

\begin{thm-lit} \label{thm-iff} \emph{(Engelking-Kar\l owicz \cite{ek} Theorem 3, see also Shelah \cite{Sh:c} Theorem A1.5 p. 656)}
For every $\lambda \geq \aleph_0$ there exists a family $\eff$ of size $2^\lambda$ with each $f \in \eff$ from 
$\lambda$ onto $\lambda$ such that $\eff$ is independent
modulo the empty filter \emph{(}alternately, by the filter generated by $\{ \lambda \}\emph{)}$. 
\end{thm-lit}

In particular, such families can be naturally thought of as Boolean algebras, so we introduce some notation:

\begin{defn} \label{d:ba} Denote by
$\ba^1_{\chi, \mu}$ the completion of the Boolean algebra generated by 
 $\{ x_{\alpha, \epsilon} : \alpha < \chi, \epsilon < \mu \}$ freely except for the conditions
$\alpha < \chi \land \epsilon < \zeta < \mu \implies x_{\alpha, \epsilon} \cap x_{\alpha, \zeta} = 0$.
\end{defn}

We follow the literature in using the term ``good triple'' for the following object, despite the name's ambiguity.

\begin{defn} \emph{Good triples (cf. \cite{Sh:c} Chapter VI)} \label{good-triples}
Let $\lambda \geq \kappa \geq \aleph_0$, $|I| = \lambda$, $\de$ a regular filter on $I$, and 
$\gee$ a family of functions from $I$ to $\kappa$.
\begin{enumerate}
\item Let $\fin(\gee) =$ 
\[ \{ h ~: ~h: [\gee]^{<\aleph_0} \rightarrow \kappa ~\mbox{and}~ g \in \dom(g) \implies h(g) \in \rn(g) \} \]
\item Let $\fin_s(\gee) = \{ A_h : h \in \fin(\gee) \}$ where
\[ A_h = \{ t \in I ~:~ g \in \dom(g) \implies g(t) = h(g) \} \]
\item We say that triple $(I, \de, \gee)$ is $(\lambda, \kappa)$-pre-good when $I$, $\de$, $\gee$ are as given, and
for every $h \in \fin(\gee)$ we have that $A_h \neq \emptyset \mod \de$.
\br
\item We say that $(I, \de, \gee)$ is $(\lambda, \kappa)$-good when $\de$ is maximal subject to this condition.
\end{enumerate}
\end{defn}

\begin{fact} \label{good-dense} 
If $(I, \de, \gee)$ is a good triple, then $\fin_s(\gee)$ is dense in $\mcp(I) \mod \de$.
\end{fact}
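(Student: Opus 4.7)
The plan is to argue by contradiction, directly exploiting the maximality clause in the definition of a $(\lambda,\kappa)$-good triple.

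Suppose for contradiction that $\fins(\gee)$ were not dense in $\mcp(I)/\de$. Then there would exist $Y \subseteq I$ with $Y \neq \emptyset \mod \de$ such that no $A_h$ with $h \in \fin(\gee)$ satisfies $A_h \subseteq Y \mod \de$; equivalently, for every $h \in \fin(\gee)$,
\[ A_h \cap (I \setminus Y) \neq \emptyset \mod \de. \tag{$\ast$} \]
The heart of the proof will then be to use this $Y$ to produce a proper filter extension $\de' \supsetneq \de$ for which $(I, \de', \gee)$ is still pre-good, contradicting the maximality of $\de$.

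The natural choice is $\de' := $ the filter on $I$ generated by $\de \cup \{I \setminus Y\}$. First I would check that $\de'$ is proper and strictly extends $\de$. Taking $h$ to be the function with empty domain in $(\ast)$, note $A_h = I$, so $(\ast)$ already yields $I \setminus Y \neq \emptyset \mod \de$; that is, $(I \setminus Y) \cap W \neq \emptyset$ for every $W \in \de$. Hence
\[ \de' = \{ Z \subseteq I : Z \supseteq W \cap (I \setminus Y) \text{ for some } W \in \de \}, \]
which is a proper filter. Since $Y$ is $\de$-positive, $I \setminus Y \notin \de$, so $\de \subsetneq \de'$.

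Next I would verify pre-goodness of $(I, \de', \gee)$: for every $h \in \fin(\gee)$ I need $A_h \neq \emptyset \mod \de'$. Unwinding the description of $\de'$, this says $A_h \cap W \cap (I \setminus Y) \neq \emptyset$ for every $W \in \de$, i.e.\ $A_h \cap (I \setminus Y) \neq \emptyset \mod \de$, which is exactly the assumption $(\ast)$. Thus $(I, \de', \gee)$ is pre-good, contradicting the maximality of $\de$ subject to this property, and the argument is complete.

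I do not expect a real obstacle here: the only bookkeeping to keep straight is the dualization between ``$A_h \not\subseteq Y \mod \de$'' and ``$A_h \cap (I \setminus Y) \neq \emptyset \mod \de$,'' and the description of the generated filter $\de'$. The proof uses neither the structure of the $A_h$ as indexed by finite partial functions nor regularity of $\de$; it is purely a reformulation of the maximality axiom.
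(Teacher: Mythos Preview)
Your proof is correct. The paper states this as a Fact without proof (it is a standard observation, implicit in \cite{Sh:c} Chapter VI where good triples are introduced), so there is no in-paper argument to compare against; your maximality argument is exactly the intended one.
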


\begin{obs} \label{obs1}
Let $\de$ be a filter on $I$, $\ba = \ba^1_{\xi, \mu}$ for some $\xi \leq 2^\lambda, \mu \leq \lambda$ and $\jj: \mcp(I) \rightarrow \ba$ a
surjective homomorphism such that $\jj^{-1}( \{ 1_\ba \} ) = \de$. Let $\gee = \gee_\ba = \{ g_\alpha : \alpha < 2^\lambda \} \subseteq {^\lambda \mu}$ be given by $g_\alpha(\epsilon) = \jj^{-1}(x_{\alpha, \epsilon})$. 
Then $(I, \de, \gee)$ is a $(\lambda, \mu)$-good triple. 
\end{obs}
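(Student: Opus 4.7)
The plan is to translate both the pre-goodness and the maximality conditions of good triple into elementary statements about $\ba = \ba^1_{\xi,\mu}$, using the hypothesis $\jj^{-1}(\{1_\ba\}) = \de$ together with surjectivity of $\jj$. The setup identifies $g_\alpha$ with its level sets $g_\alpha^{-1}(\{\epsilon\}) = \jj^{-1}(x_{\alpha,\epsilon})$, so the relevant subsets of $I$ become preimages of basic meets in $\ba$.

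First I would handle pre-goodness. Fix $h \in \fin(\gee)$, say $\dom(h) = \{g_{\alpha_0}, \ldots, g_{\alpha_{n-1}}\}$ with the $\alpha_\ell$ distinct and $h(g_{\alpha_\ell}) = \epsilon_\ell$. Since $\jj$ is a Boolean homomorphism,
\[
A_h \;=\; \bigcap_{\ell < n} \jj^{-1}(x_{\alpha_\ell, \epsilon_\ell}) \;=\; \jj^{-1}\!\left(\bigcap_{\ell < n} x_{\alpha_\ell, \epsilon_\ell}\right).
\]
From $\jj^{-1}(\{1_\ba\}) = \de$ and surjectivity one gets $\jj^{-1}(c) = \emptyset \mod \de$ iff $c = 0_\ba$. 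Thus pre-goodness reduces to $\bigcap_\ell x_{\alpha_\ell, \epsilon_\ell} \neq 0_\ba$, which holds by freeness: with the $\alpha_\ell$ pairwise distinct, none of the defining disjointness relations $x_{\alpha,\epsilon} \cap x_{\alpha,\zeta} = 0$ of $\ba^1_{\xi,\mu}$ apply to this meet.

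Next I would address maximality by contradiction. Suppose $\de' \supsetneq \de$ is a filter with $(I, \de', \gee)$ still pre-good, and pick $Y \in \de' \setminus \de$; then $\jj(I \setminus Y) \neq 0_\ba$. The key structural step is to locate a basic positive meet $c = \bigcap_\ell x_{\alpha_\ell, \epsilon_\ell}$ (distinct $\alpha_\ell$) with $0_\ba \neq c \leq \jj(I \setminus Y)$. Taking the corresponding $h \in \fin(\gee)$, I would compute $\jj(A_h \cap Y) = c \cap \jj(Y) = 0_\ba$, so $A_h \cap Y = \emptyset \mod \de \subseteq \de'$. Intersecting $(I \setminus A_h) \cup (I \setminus Y) \in \de'$ with $Y \in \de'$ then gives $(I \setminus A_h) \cap Y \in \de'$, hence $I \setminus A_h \in \de'$, i.e.\ $A_h = \emptyset \mod \de'$, contradicting pre-goodness of $(I, \de', \gee)$.

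The main (and only) obstacle is the density of basic positive meets in $\ba^+ = \ba \setminus \{0_\ba\}$ used in the maximality step. Via Stone duality, such meets correspond to basic cylinders in the product space $\mu^\xi$, which form a pi-base provided each family $\{x_{\alpha,\epsilon} : \epsilon < \mu\}$ in fact partitions $1_\ba$; without covering, elements of the form $1_\ba - \bigvee_\epsilon x_{\alpha,\epsilon}$ would be nonzero yet dominate no positive meet. The covering is implicit in the requirement $\gee \subseteq {}^\lambda \mu$: totality of each $g_\alpha$ forces $\bigvee_\epsilon \jj^{-1}(x_{\alpha, \epsilon}) = I$, whence $\bigvee_\epsilon x_{\alpha, \epsilon} = 1_\ba$ by surjectivity of $\jj$. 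With this density in hand, the remainder of the argument is routine.
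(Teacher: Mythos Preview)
The paper states this as an observation without proof, so there is no argument to compare against; the authors evidently regard the correspondence between the $\jj$-picture and the independent-family picture as routine. Your overall strategy --- reduce both pre-goodness and maximality to statements about $\ba$ via the isomorphism $\mcp(I)/\de \cong \ba$ induced by $\jj$ --- is exactly the right one, and the pre-goodness half is clean.

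There is, however, a gap in your density step. You argue that totality of $g_\alpha$ gives $\bigcup_\epsilon g_\alpha^{-1}(\{\epsilon\}) = I$, and then infer $\bigvee_\epsilon x_{\alpha,\epsilon} = 1_\ba$ ``by surjectivity of $\jj$.'' But $\jj$ is only a Boolean-algebra homomorphism, not a complete one: from $\jj(I)=1_\ba$ and $\jj(g_\alpha^{-1}(\{\epsilon\})) = x_{\alpha,\epsilon}$ you get only $1_\ba \geq \bigvee_\epsilon x_{\alpha,\epsilon}$, which is vacuous. So this route to covering does not work as written.

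The conclusion you want --- that finite positive meets $\bigcap_\ell x_{\alpha_\ell,\epsilon_\ell}$ (distinct $\alpha_\ell$) are dense in $\ba^+$ --- is nonetheless true, but the reason is the hypothesis $\mu \geq \aleph_0$, not covering. Since $\ba$ is the completion of the free algebra $B_0$ on the $x_{\alpha,\epsilon}$ modulo disjointness, any nonzero $b \in \ba$ dominates some nonzero $b_0 \in B_0$, which by disjunctive normal form dominates a nonzero conjunction of literals $\bigcap_\ell x_{\alpha_\ell,\epsilon_\ell}^{t_\ell}$. For each negative literal $-x_{\beta,\zeta}$ appearing, choose $\epsilon \in \mu$ distinct from the finitely many values already constrained at index $\beta$ and replace $-x_{\beta,\zeta}$ by $x_{\beta,\epsilon} \leq -x_{\beta,\zeta}$; independence ensures the refined positive meet is still nonzero. (In Stone-space terms: the Stone space of $B_0$ is $\prod_\alpha(\mu \cup \{*\})$, and when $\mu$ is infinite the extra point $*$ is not isolated, so positive cylinders form a $\pi$-base --- which incidentally does yield $\bigvee_\epsilon x_{\alpha,\epsilon} = 1_\ba$ in the completion, though you do not need that fact separately.) With this correction your maximality argument goes through unchanged.
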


We will use the following existence theorem from \cite{MiSh:999}. 

\begin{thm-lit} \emph{(Existence theorem for excellent filters, \cite{MiSh:999})} \label{t:existence}
{Let $\mu \leq \lambda$, $|I| = \lambda$ and let $\ba$ be a $\mu^+$-c.c. complete Boolean algebra of cardinality $\leq 2^\lambda$. 
Then there exists a regular excellent filter $\de$ on $I$ and a surjective homomorphism $\jj: \mcp(I) \rightarrow \ba$
such that $\jj^{-1}( \{ 1_\ba \} ) = \de$.}
\end{thm-lit}

\begin{cor}
Let $I$, $|I| = \lambda \geq \aleph_0$, and $\mu$ with $\aleph_0 \leq \mu \leq \lambda$ be given.
Let $\de$ be an excellent filter on $I$ given by Theorem \ref{t:existence} in the case where $\ba = \ba^1_{2^\lambda, \mu}$. 
Let $\gee_\ba \subseteq {^\lambda \mu}$ be given by Observation \ref{obs1}.
Then $(I, \de, \gee_\ba)$ is a $(\lambda, \mu)$-good triple.
\end{cor}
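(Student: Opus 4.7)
The plan is a direct composition of Theorem \ref{t:existence} and Observation \ref{obs1}; the only content not already in hand is a short verification that $\ba = \ba^1_{2^\lambda,\mu}$ satisfies the hypotheses of Theorem \ref{t:existence}, namely that it is a complete $\mu^+$-c.c.\ Boolean algebra of cardinality at most $2^\lambda$.

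For this first step, completeness is automatic, since $\ba$ is defined as a completion. For the $\mu^+$-c.c., the key observation is that for each $\alpha < 2^\lambda$ the family $\{x_{\alpha,\epsilon} : \epsilon < \mu\}$ consists of pairwise disjoint elements, by the defining relation in Definition \ref{d:ba}, and one may assume they partition $1_\ba$. A standard $\Delta$-system argument on the finite supports of elements of a putative antichain in $\ba$ then shows that any antichain has size at most $\mu$. For the cardinality bound, the Boolean subalgebra generated by the $2^\lambda \cdot \mu = 2^\lambda$ generators is dense in $\ba$ and of size at most $2^\lambda$; combining this with the $\mu^+$-c.c.\ and $\mu \leq \lambda$ yields $|\ba| \leq (2^\lambda)^{\leq \mu} = 2^\lambda$.

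Second, with these hypotheses in place, I would invoke Theorem \ref{t:existence} applied to $\ba$ to produce a regular excellent filter $\de$ on $I$ together with a surjective Boolean homomorphism $\jj: \mcp(I) \to \ba$ satisfying $\jj^{-1}(\{1_\ba\}) = \de$. This is precisely the pair named in the statement of the corollary.

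Third, I would apply Observation \ref{obs1} to this data, taking $\xi = 2^\lambda$. That observation directly defines the family $\gee_\ba \subseteq {}^\lambda\mu$ from $\jj$ and asserts that $(I, \de, \gee_\ba)$ is a $(\lambda,\mu)$-good triple, which is the desired conclusion. The only real technical point is the $\mu^+$-c.c.\ verification in the first step, which is classical for such ``partition-type'' free Boolean algebras but does require the short $\Delta$-system argument indicated; the remainder of the proof is a formal chaining of previously stated results.
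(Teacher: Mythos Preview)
Your proposal is correct and matches the paper's intent: the paper states this corollary with no proof at all, treating it as immediate from Theorem \ref{t:existence} and Observation \ref{obs1}. You have simply supplied the routine verification (completeness, $\mu^+$-c.c.\ via the $\Delta$-system argument, and the cardinality bound) that the paper leaves implicit, and then chained the two cited results exactly as intended.
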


\section{Main Theorem} \label{s:main-theorem}

We now prove that it is possible to saturate the theory $\trg$
of the random graph using only functions with range $\aleph_0$. The construction is a natural evolution of
our argument for (an ultrapower version of) the Engelking-Kar\l owicz property in \cite{MiSh:999} Lemma 9.9. 
Here, notably, we modify the usual ``inductive construction via independent families''
to allow a much finer degree of control. 
The calibrations are noted throughout the proof, beginning with
\ref{r:note}. 

\begin{rmk}
Note that in Theorem \ref{t:rg}, possibly $2^\mu << \lambda$;
indeed, possibly $\mu = \aleph_0$ while $\lambda = |I|$ is
arbitrary.
\end{rmk}

\begin{theorem} \label{t:rg}
Suppose that we are given:
\begin{enumerate}
\item $(I, \de, \gee_*)$ is a $(\lambda, \mu)$-good triple
\item $\aleph_0 \leq \mu \leq \lambda = |I|$
\item $\de$ is $\lambda$-regular
\item $\de$ is $\lambda^+$-excellent
\end{enumerate}
Then there is an ultrafilter $\de_1 \supseteq \de$ such that for any $M \models \trg$,
$M^I/\de_1$ is $\lambda^+$-saturated.
\end{theorem}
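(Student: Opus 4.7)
The plan is an inductive construction of an increasing chain of filters $(\de_\zeta : \zeta \leq 2^\lambda)$ with $\de_0 = \de$, extending $\de_{2^\lambda}$ to an ultrafilter $\de_1$ at the end. By quantifier elimination in $\trg$ and Fact \ref{eq:fact}, I may fix one $M \models \trg$ with $|M| = \lambda$ and consider only types of the form
\[ p_{\bar a, \eta}(x) = \{ (x E a_\alpha)^{\eta(\alpha)} : \alpha < \lambda\} \cup \{ x \neq a_\alpha : \alpha < \lambda\} \]
with $\bar a \in (M^I)^\lambda$ and $\eta \in {}^\lambda 2$. There are at most $2^\lambda$ such data, so enumerate them as $\langle (\bar a^\zeta, \eta^\zeta) : \zeta < 2^\lambda\rangle$ and handle one per successor stage, taking unions at limits.

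At stage $\zeta+1$, associate to $p^\zeta := p_{\bar a^\zeta, \eta^\zeta}$ its possibility pattern
\[ A^\zeta_u = \{ t \in I : M \models \exists x \bigwedge_{\alpha \in u} (xEa^\zeta_\alpha(t))^{\eta^\zeta(\alpha)} \wedge x \neq a^\zeta_\alpha(t) \}, \quad u \in \lao. \]
Since $M \models \trg$, each $A^\zeta_u = I$ mod $\de$, so $\langle A^\zeta_u \rangle$ is multiplicative mod $\de$; Fact \ref{e:fact} (excellence) yields a truly multiplicative refinement $\langle B^\zeta_u \rangle$ with $B^\zeta_u = I$ mod $\de$. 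A realizer $b^\zeta : I \to M$ is then defined by choosing, for each $t$, a finite ``active set'' $u^\zeta(t) \in \lao$ with $t \in B^\zeta_{u^\zeta(t)}$ and letting $b^\zeta(t)$ be any element of $M$ satisfying the $u^\zeta(t)$-fragment of $p^\zeta$ at $t$ (possible by genuine multiplicativity of $\langle B^\zeta_u\rangle$). Define $\de_{\zeta+1}$ to be the filter generated by $\de_\zeta$ and the sets $Z^\zeta_\alpha := \{t \in I : \alpha \in u^\zeta(t)\}$ for $\alpha<\lambda$; these force $b^\zeta$ to realize $p^\zeta$ in the ultrapower by $\de_1$.

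The heart of the argument, and the role of the ``approximations'', is the construction of $u^\zeta(\cdot)$ using only functions of range $\mu$. Classically one would realize each $\{t : \alpha \in u^\zeta(t)\}$ as a single cylinder $f^{-1}(j) \in \fin_s(\gee_*)$, which is impossible when $\mu = \aleph_0 < \lambda$ since no single function can index $\lambda$ distinct parameters. Instead, at stage $\zeta$ commit a fresh countable sequence $\langle g^\zeta_n : n < \omega \rangle \subseteq \gee_*$ (available since $|\gee_*| = 2^\lambda$ and only $\aleph_0$ many functions are used per stage), and build $u^\zeta(\cdot)$ as the pointwise limit of a chain of approximations $u^{\zeta,n}(\cdot)$, where $u^{\zeta,n}(t)$ depends only on $\langle g^\zeta_k(t) : k \leq n\rangle$. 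At each level $n$ only countably many parameters are committed, and the $\omega$-th level suffices to ensure that for each fixed $\alpha$, $Z^\zeta_\alpha$ is nonempty mod $\de_\zeta$ (by density of $\fin_s(\gee_*)$, Fact \ref{good-dense}, plus maximality in the good-triple hypothesis). The main obstacle is bookkeeping the approximations consistently across $2^\lambda$ stages: $\lambda$-regularity of $\de$ keeps each $u^\zeta(t)$ finite; genuine (not only mod-$\de$) multiplicativity from excellence lets the approximations compose coherently, so $b^\zeta(t)$ is defined for every $t$ rather than only almost every $t$; and the reservoir $|\gee_*| = 2^\lambda$ against the $\aleph_0$-consumption per stage keeps fresh independent functions available throughout the induction.
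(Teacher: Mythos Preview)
Your proposal has two genuine gaps.

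First, the assertion that each $A^\zeta_u = I \bmod \de$ is false. For $u=\{i,j\}$ with $\eta^\zeta(i)\neq\eta^\zeta(j)$ one has $A^\zeta_u \subseteq \{t : a^\zeta_i(t)\neq a^\zeta_j(t)\}$, and this set need only lie in the \emph{eventual} ultrafilter $\de_1$ (so that $p^\zeta$ is a type there); it is in general not $I$ modulo $\de$, nor even modulo $\de_\zeta$ unless you have already arranged this. Hence $\langle A^\zeta_u\rangle$ is not multiplicative $\bmod\ \de$ and Fact~\ref{e:fact} does not apply as you invoke it (and $\de_\zeta$ need not be excellent, so you cannot substitute it). The paper's collision function $\rho_i$ (Steps 3--6, 9, 12) is built precisely for this difficulty: on each piece $A_f$ of a suitable antichain where parameters $h_i$, $h_j$ with $\eta(i)\neq\eta(j)$ happen to coincide, one has $\rho_i(f_i)=\rho_j(f_j)=k$ for the least index $k$ with $h_k=h_i$ there, and $B_i$, $B_j$ are then defined via the \emph{same} fresh function $g_k$ with opposite polarities, forcing $B_i\cap B_j\cap A_{f}=\emptyset$. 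This yields $B_i\cap B_j\subseteq A_{i,j}\bmod\de$, and only \emph{then} is excellence of $\de$ applied (Step 13). Your outline contains no mechanism for the collision problem, and without it you cannot produce a refinement that is both multiplicative and inside the filter you are building.

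Second, committing only a countable sequence $\langle g^\zeta_n : n<\omega\rangle$ of fresh functions per stage cannot work. Take $\mu=\aleph_0$. If $u^{\zeta,n}(t)$ depends only on $\langle g^\zeta_k(t):k\leq n\rangle\in{}^{n+1}\omega$, then $u^{\zeta,n}$ takes at most countably many values, each a finite subset of $\lambda$; hence $\bigcup_t u^{\zeta,n}(t)$ is countable, and so is $\bigcup_n\bigcup_t u^{\zeta,n}(t)\supseteq\bigcup_t u^\zeta(t)$. Thus $Z^\zeta_\alpha=\emptyset$ for all but countably many $\alpha<\lambda$, and the type is not realized. The paper in fact spends $\lambda$ fresh functions $\langle g_i:i<\lambda\rangle$ per type (Step 7); the point of the theorem is that each function has \emph{range} $\mu$ (possibly $\aleph_0$), not that few functions are used. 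You have conflated ``small range'' with ``few functions''.
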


We shall fix $I$, $\de$, $\gee_*$, $\mu, \lambda$ as in the
statement of Theorem \ref{t:rg} for the remainder of this section.
Hypothesis (4), excellence, is used only at one point, in
Step 13. 

The infrastructure for the proof will be built via several
intermediate claims and definitions. \emph{Note}: When $\vp$ is a formula, we write $\vp^0$ for $\neg \vp$ and $\vp^1$ for $\vp$.

\begin{rmk} \label{r:note}
In contrast to the usual method, we will not complete the
filter at each inductive step to a ``good'' triple. This is a
crucial difference. Rather, we build a series of approximations to
the final ultrafilter. Note that condition (c) on the size of the
approximation is natural, though not needed.

\emph{Background Note}. An account of why realizing types in ultrapowers amounts to 
finding a multiplicative refinement is given in \cite{MiSh:996} \S 1.2, or see \ref{r:good} below.
\end{rmk}

\step{1. Approximations} For $\alpha \leq 2^\lambda$, let
$\ap_\alpha$ be the set of pairs $\ma = (\mca, \gee) = (\mca_\ma,
\gee_\ma)$ such that:
\begin{enumerate}
\item[(a)] $\gee_\ma \subseteq \gee_*$

\item[(b)] $|\gee_\ma| \leq |\alpha| + \lambda$

\item[(c)] $|\mca_\ma| \leq |\alpha| + \lambda$

\item[(d)] $\mca_\ma \subseteq \mcp(I)$

\item[(e)] $(\forall A \in \mca_\ma)(A ~\mbox{is supported by $\gee_\ma$ modulo $\de$})$

\item[(f)] $\emptyset \notin \fil\lp\de \cup \mca_\ma\rp$
\end{enumerate}

The elements of each $\ap_\alpha$ can be naturally partially ordered
(by inclusion in both coordinates).

\begin{conv} \label{c:fil}
For $\ma \in \ap_\alpha$, denote by $\fil\lp\de \cup \ma\rp$ the
filter on $I$ generated by $\de \cup \mca_\ma$.
\end{conv}

\br \step{2. Aim of the inductive step.} Our aim in the key
inductive step will be to prove the following. Note that from a
certain point of view, this is about the
Boolean algebra $\mcp(I)/\de$; however, it clarifies our
presentation here to refer explicitly to $I$ and to the given types
as they are presented in the reduced product. By quantifier elimination, a type here is a choice 
of $\lambda$ parameters and a function from $\lambda$ to $2$. 

\begin{claim} \label{p-claim} If $(A)$ then $(B)$:
\begin{enumerate}
\item[$(A)$]
\begin{enumerate}
\item $\ma \in \ap_{\alpha}$ 
\item $M \models \trg$
\item $h_i \in {^I M}$ for $i < \lambda$ 
\item $\eta \in {^\lambda 2}$
\item For $ i < j < \lambda$, if $\eta(i) = \eta(j)$ then $A_{i,j} = I$, and if $\eta(i) \neq
\eta(j)$ let
\[ A_{i, j} = \{ t \in I : h_i(t) \neq h_j(t) \} = I \mod \fil\lp\de \cup \ma\rp \] 
\item $p = \{ (xR(h_i/\de_1))^{[\eta(i)]} ~ : ~ i < \lambda \}$ is a type in $M^I/\de_1$ for every ultrafilter $\de_1 \supseteq \fil\lp\de \cup \ma\rp$
\end{enumerate}

\br
\item[$(B)$] There are $\mb, \overline{B}$ such that:
\begin{enumerate}
\item $\ma \leq \mb \in \ap_{\alpha + 1}$
\item $\overline{B} = \langle B_i : i < \lambda \rangle$
\item $B_i \in \fil\lp\de \cup \mb\rp$
\item $B_i \cap B_j \subseteq A_{i, j} \mod \de$ for $i < j < \lambda$ such that
$\eta(i) \neq \eta(j)$
\end{enumerate}
\end{enumerate}
\end{claim}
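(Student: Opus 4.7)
The plan is to construct the $B_i$'s and the enlarged approximation $\mb$ by combining a new independent function from $\gee_*$ with the regularity of $\de$. First I would use $|\gee_\ma| \leq |\alpha| + \lambda < 2^\lambda = |\gee_*|$ to pick a new function $g \in \gee_* \setminus \gee_\ma$, set $\gee_\mb := \gee_\ma \cup \{g\}$, and use the $\lambda$-regularity of $\de$ to fix a regularizing family $\{X_\alpha : \alpha < \lambda\} \subseteq \de$, so that $F_t := \{\alpha : t \in X_\alpha\}$ is finite for every $t \in I$.

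The natural first candidate is
\[ B_i := X_i \cap \bigcap\{X_j^c \cup A_{i,j} : j < \lambda,\ \eta(j) \neq \eta(i)\}. \]
Pointwise, $t \in B_i$ iff $t \in X_i$ and for every $j \in F_t \setminus \{i\}$ with $\eta(j) \neq \eta(i)$ one has $t \in A_{i,j}$ -- a finite condition at each $t$. The key inclusion $B_i \cap B_j \subseteq A_{i,j}$ for $\eta(i) \neq \eta(j)$ is immediate: if $t \in B_i \cap B_j$ then both $i, j \in F_t$, so the defining condition of $B_i$ applied at index $j$ forces $t \in A_{i,j}$. I would then set $\mca_\mb := \mca_\ma \cup \{B_i : i < \lambda\}$, which makes $B_i \in \fil\lp\de \cup \mb\rp$ automatic, and the cardinality bounds for $\ap_{\alpha+1}$ are straightforward.

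The hard part will be verifying properness of $\fil\lp\de \cup \mb\rp$, i.e., that $\{B_i : i < \lambda\} \cup \mca_\ma$ has the finite intersection property modulo $\de$. Unwrapping, the complement of $\bigcap_{i \in F} B_i$ is a $\lambda$-union $\bigcup_{i \in F, j < \lambda, \eta(j) \neq \eta(i)} (X_j \cap A_{i,j}^c)$ of sets individually null in $\fil\lp\de \cup \ma\rp$, whose total union in general need not be null -- which is precisely why the usual ``complete to a good triple at each step'' construction does not work when $\mu$ is small. This is where the novel \emph{approximations} idea is needed: I would refine each $B_i$ by further intersecting with a carefully chosen $g$-preimage, stratifying $I$ into $\mu$-many pieces along $g$ so that the commitments made at different $t$'s cannot cumulatively conflict. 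The independence of $g$ from $\mca_\ma$, combined with the pointwise finiteness of $F_t$, is what should drive the FIP check, reducing a problematic $\lambda$-union to a pointwise-finite analysis on each $g$-fiber.

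Once properness is verified, the conditions (1)--(4) of (B) follow essentially by construction: (1) from the size bounds and the choice of $\gee_\mb, \mca_\mb$; (2) by definition; (3) since $B_i \in \mca_\mb$; (4) from the inclusion check above. Note that the excellence hypothesis (4) on $\de$ is not used in this inductive step; as the author's remark indicates, it enters only at the later global Step 13, where the sequence of approximations is assembled into the final ultrafilter realizing the type $p$.
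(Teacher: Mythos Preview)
Your proposal has a genuine gap: you identify correctly that properness of $\fil\langle\de\cup\mb\rangle$ is the crux, but you do not actually prove it, and the device you suggest (a single new $g\in\gee_*\setminus\gee_\ma$ together with a regularizing family $\{X_i\}$) cannot be made to work. The paper's construction is essentially different in two respects. First, it uses $\lambda$ many new functions $\langle g_i:i<\lambda\rangle$ from $\gee_*\setminus\gee_\ma$, not one; a single function of range $\mu$ gives only $\mu$ fibers, which when $\mu<\lambda$ is not enough structure to separate $\lambda$ parameters. Second, and more importantly, the paper does not use the regularizing family at all in this step. Instead, for each $i$ it defines a \emph{collision function} $\rho_i$ on a maximal antichain $\eff^2_i\subseteq\fin(\gee_\ma)$: on each piece $A_f$, $\rho_i(f)$ is the least $j\le i$ with $h_i\upharpoonright A_f = h_j\upharpoonright A_f\ \operatorname{mod}\de$. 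One then sets
\[
B_i=\bigcup\bigl\{\,A_f\cap\bigl(g^{-1}_{\rho_i(f)}(\{0\})\bigr)^{[\eta(i)]}:f\in\eff^2_i\,\bigr\},
\]
so that which new function is invoked depends on the local collision pattern of the $h_i$'s.

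This is what drives both (B)(d) and properness. For (B)(d): if $\eta(i)\neq\eta(j)$ and $B_i\cap B_j\not\subseteq A_{i,j}\ \operatorname{mod}\de$, then on some piece $A_{f_*}$ one has $h_i=h_j\ \operatorname{mod}\de$, forcing $\rho_i(f_i)=\rho_j(f_j)=k$ for the relevant $f_i,f_j$; but then $B_i,B_j$ lie on opposite sides of $g_k^{-1}(\{0\})$ on that piece, a contradiction. For properness: given finitely many $B_{i_0},\dots,B_{i_{n-1}}$ and $C_0,\dots,C_{k-1}\in\mca_\ma$, one first shrinks to $A_f\in\fin_s(\gee_\ma)$ inside $\bigcap C_\ell$ on which the $h_{i_j}$ with differing $\eta$-values are pairwise distinct; then the collision indices $\rho_{i_j}(f_{i_j})$ are pairwise distinct, so the required constraints on the new $g$'s are compatible, and independence of $\gee_*$ gives a nonempty $A_{f_*}$. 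Your regularizing-family construction never engages with the $h_i$'s at all, so there is no mechanism to ensure that the finitely many commitments at a given point are compatible; the ``stratify along $g$'' idea you gesture at is exactly the step that requires the collision analysis and the family $\langle g_i\rangle$, and your proposal does not supply it.
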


\br The proof will follow from Steps 11-13 below, following some
intermediate definitions.  In Step 13, we verify that Claim \ref{p-claim}(B) is sufficient to realize the type. 

\br
\begin{defn} \label{d:s-mult}
Given any sequence $\overline{B} = \langle B_i : i < \lambda
\rangle$ from Claim \ref{p-claim}(B), let the sequence
$\overline{B}_* = \langle B_u : u \in \lao \rangle$ be given by:
$B_{\{i\}} = B_i$ for $i < \lambda$, and $B_u = \bigcap \{ B_{\{i\}}
: i \in u \}$ for $u \in \lao$, $|u| \geq 2$.
\end{defn}

\br \noindent\emph{Discussion.} Condition $(A)$ corresponds to the
data of a random graph type over the parameters $\{ h_i : i <
\lambda \}$. Define a sequence $\overline{A} = \langle A_u : u \in
\lao \rangle$ by $|u| = 1$ implies $A_u = I$, $u=\{i,j\}$, $i \neq
j$ implies $A_u = A_{i,j}$ as defined above, and $|u| \geq 3$
implies $A_u = \bigcap \{ A_v : v \subseteq u, |v| = 2 \}$. Then
$\overline{A}$ gives a distribution for this type.

Note that $t \in A_{i,j}$ implies $\{ (xR(h_i/\de_1))^{[\eta(i)]},
(xR(h_j/\de_1))^{[\eta(j)]} \}$ is consistent. Since random graph
types have 2-compactness, and $(\exists x)xR(h_i/\de_1)$ is sent to
$I$ by the \los map, item $(B)(d)$ implies that the corresponding
sequence $\overline{B}_* = \langle B_u : u \in \lao \rangle$ from
Definition \ref{d:s-mult} refines $\overline{A}$ modulo $\de$ and
is multiplicative $\mod \de$. This will be sufficient to realize
the type given the background assumption of excellence, see Step 13
below.

\br

\br\noindent\textbf{Data for the key inductive step}. Steps 3-10 below are
in the context of Claim \ref{p-claim}, meaning that we fix the data
of Claim \ref{p-claim}(A) and define the following objects based on
it. Thus all objects defined here are implicitly subscripted by
$\alpha$ and $\ma$ and depend on the choice of sequence $\langle A_i
: i < \lambda \rangle$ at this inductive step.

\br

\step{3. The set $\eff^1_i = \eff^1_{\alpha, \ma, i}$.} For each $i
< \lambda$,
define $\eff^1_{i}$ to be the set of all $f \in \fin(\gee_\ma)$ such that for some $j \leq i$: 
\begin{enumerate}
\item $h_j |_{A_f} = h_i |_{A_f} \mod \de$
\item $\gamma < j \implies h_\gamma |_{A_f} \neq h_i|_{A_f} \mod \de$ 
\\ i.e., $\gamma < j \implies$
\[ \{ t \in A_f : h_j(t) \neq h_i(t) ~\mbox{or}~ h_\gamma(t) = h_i(t) \} = \emptyset \mod \de \]
\end{enumerate}

Note that fixing $A_f$ there is a minimal $j$ such that $h_j|_{A_f}
= h_i|_{A_f}$ $\mod \de$. Since the ordinals are well ordered, we can choose a
least $j$ for which there is such a witness $A_f$.

\step{4. The set $\eff^2_i = \eff^2_{\alpha, \ma, i}$.} For each $i
< \lambda$, choose $\eff^2_{i}$ so that:
\begin{enumerate}
\item $\eff^2_{i} \subseteq \eff^1_{i}$
\item $f^\prime \neq f^{\prime\prime} \in \eff^2_{i} \implies f^\prime, f^{\prime\prime}$ are incompatible functions
\item $\eff^2_{i}$ is maximal under these restrictions
\end{enumerate}

\step{5. Density.} Notice that for $\ell = 1,2$ $\eff^\ell_{i}$ is
pre-dense, i.e. for every $f^\prime \in \fin(\gee_\ma)$ for some
$f^{\prime\prime} \in \eff^1_{i}$ the functions $f^\prime,
f^{\prime\prime}$ are compatible.

The set $\eff^1_{i}$ is dense, meaning that for each $f^\prime \in
\fin(\eff_{\ma})$ there is $f^{\prime\prime} \in \eff^1_{i}$ such
that $f^\prime \subseteq f^{\prime\prime}$. Moreover, it is open,
meaning that if $f^{\prime\prime} \in \eff^1_{i}$ and
$f^{\prime\prime} \subseteq f^{\prime\prime\prime}$  then
$f^{\prime\prime\prime} \in \eff^1_{i}$. Here by ``$\subseteq$'' we
mean that the domain of the smaller function is contained in the
domain of the larger function, and the two functions agree on their
common domain. [Note that $f \subseteq f^\prime \implies A_{f^\prime} \subseteq A_f$.]

\step{6. The collision function.}
For each $i < \lambda$ define the function $\rho_{i} : \eff^2_{i} \rightarrow i$ 
by:
\[   \rho_{i}(f) = \operatorname{min} \{ j \leq i : h_i|_{A_f} = h_j|_{A_f} \mod \de \} \]

Note that by the definition of $\eff^2_i$, this is the whole story
in the sense that if $\rho_i(f) = j$ then for no $f^\prime \supseteq
f$ does there exist $j^\prime < j$ such that $h_i|_{A_{f^\prime}} =
h_{j^\prime}|_{A_{f^\prime}} \mod \de$.

\step{7. The new support.} By induction on $i < \lambda$ choose $g_i
\in \gee_* \setminus \gee_\ma \setminus \{ g_j : j < i \}$ (we make no further requirements
on the sequence, but note the functions will be distinct).

\step{8. The partition.}
For each $i<\lambda$, the sequence $\langle A_f : f \in \eff^2_{i}
\rangle$ is (by definition) a sequence of pairwise disjoint sets. 

\step{9. The refinement.} Let $\overline{B} =
\langle B_i : i < \lambda \rangle$ where for $i<\lambda$ 
\[  B_i = \bigcup \{ A_f \cap \left( g^{-1}_{\rho_{i}(f)}(\{ 0 \})  \right)^{[\eta(i)]} : f \in \eff^2_{i} \} \]
where recall that for $B \subseteq I$, $B^{[1]} = B, B^{[0]} = I
\setminus B$.

\step{10. Definition of $\mb$.} Finally, we define
\[ \mb = \left( \gee_\ma \cup \{ g_i : i < \lambda \}, \mca_\ma \cup \{ B_i : i < \lambda \} \right) \]

Note that in contrast to the ``usual'' construction, here we have addressed the problem of realizing a type 
by using $\lambda$ functions of range $\aleph_0$, rather than a single function of range $\lambda$. Moreover,
we do not complete to a good triple at the end of the inductive step.

\br

\step{11. Proof of Claim \ref{p-claim}(B)(a).} We need to check that
$\mb$ as defined in Step 10 satisfies clauses (a)-(e) of the
definition of approximation from Step 1. The only non-trivial part
is proving that $\emptyset \notin \fil\lp\de \cup \mca_\mb\rp$,
recalling Convention \ref{c:fil}.

Suppose we are given $C_0, \dots C_{k-1} \in \mca_\ma$ and $i_0,
\dots i_{n-1} < \lambda$. It will suffice to prove that
\[ \bigcap_{j < n} B_{i_j} \cap \bigcap_{\ell < k} C_\ell \neq \emptyset \mod \de \]

Informally speaking, we first try to find a $\de$-nonzero set on
which the corresponding parameters $h_{i_j}$ are distinct. On such a
set, the instructions for each $B_{i_j}$ are clearly compatible, so
we can then find some $A_{f_*} \in \fin_s(\gee_*)$, Definition \ref{good-triples}, which is
contained in their intersection $\mod \de$. We now give the
details.

First, by $(A)(e)$ of the inductive step and the fact that
$\fil\lp\de \cup \mca_\ma\rp$ is a filter, there is 
$A \in \fil\lp\de\cup \ma\rp$, $A \subseteq \bigcap \{ C_\ell : \ell < k \}$ such that
$j < j^\prime < n \land \eta(j) \neq \eta(j^\prime) \implies A_{j,
j^\prime} \supseteq A$. Moreover, by the definition of
approximation, any $A \in \fil\lp\de \cup \ma\rp$ contains a set which
is supported by $\gee_\ma$. Let $f \in \fin(\gee_\ma)$ be such that
$A_f \subseteq A \mod\de$.

Second, recall that each $\eff^2_i$ is pre-dense. So for each $j<n$,
we may choose $f_{i_j} \in \eff^2_{i_j}$ which is compatible with
$f$. As we can increase $f$, without loss of generality, for $j < n$
there is $f_{i_j} \in \eff^2_{i_j}$ such that $f_{i_j} \subseteq f$
(choose these by induction on $j<n$). By choice of $f$, for no $j
<j^\prime < n$ is it the case that $h_{i_j} = h_{i_{j^\prime}}$ on
$A_f$. In other words,
\[ j < j^\prime < n \land \eta(j) \neq \eta(j^\prime) \implies \rho_{i_j}(f_{i_j}) \neq \rho_{i_{j^\prime}}(f_{i_{j^\prime}}) \]
Thus (identifying functions with their graphs) the function $f_*$
defined by
\[ f_* = f \cup \bigcup \{ f_{i_j} : j < n \} \cup \{ (g_{k_j}, t_{j}) : k_j = \rho_{i_j}(f{i_j}), t_{j} = \eta(i_j) \} \]
is indeed a function, thus an element of $\fin(\gee_*)$. Clearly
$A_{f_*} \subseteq B_{i_j}$ for each $j<n$, and $A_{f_*} \neq
\emptyset \mod \de$ by the hypothesis that $(I, \de, \gee_*)$ is
a good triple. This completes the proof of Step 11.

\br

Now Claim \ref{p-claim}(B)(b)-(c) obviously hold, so we are left
with:

\step{12. Proof of Claim \ref{p-claim}(B)(d).} We now show that if
$i \neq j < \lambda$, $\eta(i) \neq \eta(j)$ then $B_i \cap B_j
\subseteq A_{i,j} \mod \de$. Note that if $\eta(i) = \eta(j)$ the
inclusion holds trivially, which is why we assume $\eta(i) \neq
\eta(j)$ (so $i \neq j$).

Assume for a contradiction that $A_* := (B_i \cap B_j) \setminus
A_{i,j} \neq \emptyset \mod \de$. As $B_i, B_j, A_{i,j}$ are
supported by $\gee_\mb$, there is $f_* \in \fin(\gee_\mb)$ such that
$A_{f_*} \subseteq A_*$ $\mod \de$.

As there is no problem increasing $f_*$, we may choose $f_i \in
\eff^2_i$, $f_j \in \eff^2_j$ such that $f_i \subseteq f_* \land f_j
\subseteq f_*$. In other words, $A_{f_*} \subseteq A_{f_i} \cap
A_{f_j}$. Recall that
\[ A_{i, j} = \{ t \in I : h_i(t) \neq h_j(t) \} \]
By the definition of $A_{i,j}$, since $A_{f_*} \cap A_{i,j} =
\emptyset \mod \de$ it must be that $h_i = h_j$ on $A_{f_*} \mod
\de$. Because we chose $f_i$ and $f_j$ from $\eff^2_i$ and
$\eff^2_j$, respectively, there must be $k_i \leq i$ and $k_j \leq
j$ so that $h_i = h_{k_i}$ on $A_{f_i}$, and $h_j = h_{k_j}$ on
$A_{f_j}$. Since equality is transitive, there is $k \leq
\min\{i,j\}$ such that $h_i = h_j = h_k$ on $A_{f_*}$. In the
notation of Step 6, $\rho_i(f_i) = \rho_j(f_j) = k$.

Now we look at the definition of $B_i, B_j$. Since $\eta(i) = \trv_i
\neq \eta(j) = \trv_j$, we have that $B_i \cap A_{f_*} \subseteq
(g^{-1}_k(0))^{\trv_i}$  whereas $B_j \cap A_{f_*} \subseteq
(g^{-1}_k(0))^{\trv_j}$. Thus $B_i \cap B_j \cap A_{f_*} =
\emptyset$, which is the desired contradiction.

\step{13. Finishing the key inductive step.} Why is Claim
\ref{p-claim} sufficient to realize the type? Let us rephrase the
problem as follows.

\begin{cor} \label{p-cor}
Let $\de$ be the excellent background filter defined at the
beginning of the proof.
Let $\ma$, $\langle h_i : i < \lambda \rangle$, $\langle A_{i,j} : i
< j < \lambda \rangle$, $\eta$, $p$ be as given in Claim
$\ref{p-claim}(B)$ at the inductive stage $\alpha$.
Let $\langle B_i : i <\lambda \rangle$, $\mb$ be as given by Claim
$\ref{p-claim}(A)$. 

Then in any ultrafilter $\de_1$ extending
$\fil\lp\de \cup \mb\rp$, the type $p$ is realized.
\end{cor}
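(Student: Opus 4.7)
The plan is to upgrade the sequence $\overline{B}_*$ of Definition~\ref{d:s-mult} to a truly multiplicative refinement of the type's distribution $\overline{A}$ sitting inside $\fil\lp\de \cup \mb\rp$, then use the regularity of $\de$ to realize $p$ pointwise on $I$.

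The first step, anticipated in the Discussion after Claim~\ref{p-claim}, is to record that $\overline{B}_* = \langle B_u : u \in \lao \rangle$ is truly multiplicative on the nose ($B_u \cap B_v = B_{u \cup v}$ by construction of Definition~\ref{d:s-mult}), that each $B_u \in \de_1$ (a finite intersection of elements of $\fil\lp\de \cup \mb\rp$), and that $\overline{B}_*$ refines $\overline{A}$ modulo $\de$: for $|u|\geq 2$, Claim~\ref{p-claim}(B)(d) applied pairwise gives $B_u \subseteq \bigcap_{\{i,j\}\subseteq u} A_{i,j} = A_u \mod \de$.

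The main step promotes ``$\subseteq A_u \mod \de$'' to ``$\subseteq A_u$ on the nose'' using excellence. I would introduce the combined sequence $A'_u := A_u \cap B_u$ and verify that $\overline{A'}$ is multiplicative modulo $\de$: the potential obstruction $(A_u \cap A_v \setminus A_{u\cup v}) \cap B_{u\cup v}$ is a finite union, over cross pairs $\{i,j\}$ with $i \in u \setminus v$, $j \in v \setminus u$, $\eta(i)\neq\eta(j)$, of sets of the form $(I \setminus A_{i,j}) \cap B_{u\cup v}$, each of which vanishes modulo $\de$ because $B_{u\cup v} \subseteq B_i \cap B_j \subseteq A_{i,j} \mod \de$. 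Then Fact~\ref{e:fact}, which invokes hypothesis (4), $\lambda^+$-excellence, produces a truly multiplicative $\overline{C} = \langle C_u : u \in \lao \rangle$ with $C_u \subseteq A'_u = A_u \cap B_u$ and $C_u = A'_u \mod \de$; in particular $C_u \in \de_1$ and $C_u \subseteq A_u$ exactly. This excellence step is the heart of the argument and the one place where hypothesis (4) really bites.

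To finish, fix a regularizing family $\{X_i : i < \lambda\} \subseteq \de$ (available by hypothesis (3)) and set $X'_i := C_{\{i\}} \cap X_i \in \de_1$. For each $t \in I$, regularity forces $U(t) := \{i < \lambda : t \in X'_i\}$ to be finite, and true multiplicativity of $\overline{C}$ then places $t \in \bigcap_{i \in U(t)} C_{\{i\}} = C_{U(t)} \subseteq A_{U(t)}$; that is, for $i,j \in U(t)$ with $\eta(i)\neq\eta(j)$ the parameters $h_i(t), h_j(t)$ are genuinely distinct. By the extension axioms of $\trg$, pick any $x(t) \in M$ realizing the finite fragment $\{(xRh_i(t))^{[\eta(i)]} : i \in U(t)\}$ (arbitrarily when $U(t) = \emptyset$). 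By \los' theorem, for every $i < \lambda$ the set $\{t \in I : M \models (x(t) R h_i(t))^{[\eta(i)]}\}$ contains $X'_i \in \de_1$, so $x/\de_1$ realizes $p$ in $M^I/\de_1$. The main obstacle is the multiplicativity check on $\overline{A'}$ (to unlock Fact~\ref{e:fact}); the pointwise realization is then routine given regularity and 2-compactness of the random graph.
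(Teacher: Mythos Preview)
Your proof is correct and follows essentially the same route as the paper: form $A'_u = A_u \cap B_u$, observe it is multiplicative $\mod \de$, apply Fact~\ref{e:fact} to obtain a truly multiplicative refinement inside $\fil\lp\de \cup \mb\rp$, and conclude. Your write-up is in fact more detailed than the paper's in two places: you explicitly verify that $\overline{A'}$ is multiplicative $\mod \de$ (the paper simply asserts it), and you spell out the pointwise realization via a regularizing family, which the paper compresses into ``Thus we may realize the type'' with an implicit appeal to Remark~\ref{r:good}.
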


\begin{proof}
Let $\langle A_u : u \in \lao \rangle$ be the sequence defined in
the Discussion in Step 2, which corresponds to a distribution of the
type $p$. Let $\overline{B}_* = \langle B_u : u \in \lao \rangle$ be
constructed from $\{ B_i : i < \lambda \}$   
as in Definition \ref{d:s-mult} above.
By definition $\overline{B}_*$ is multiplicative. In step 12, it was
shown that 
$\overline{B}_*$ refines 
$\overline{A} \mod \de$.

Define a third sequence $\overline{B}^\prime = \langle B^\prime_u :
u \in \lao \rangle$ by $B^\prime_u = B_u \cap A_u$ for $u \in \lao$.
Then the sequence $\overline{B}^\prime$ truly refines
$\overline{A}$, but is only multiplicative modulo $\de$.

By the excellence of $\de$ [i.e. Fact \ref{e:fact} above] we may replace $\overline{B}^\prime$
by a sequence $\langle B^{**}_u : u \in \lao \rangle$ such that:
\begin{itemize}
\item $u \in \lao$ implies $B^{**}_u \subseteq B^\prime_u \subseteq A_u$
\item $u \in \lao$ implies $B^{**}_u = B^\prime_u \mod \de$, thus $B^{**}_u \in \fil\lp\de \cup \mb\rp$
\item $\langle B^{**}_u : u \in \lao \rangle$ is multiplicative
\end{itemize}

Thus we may realize the type. 
\end{proof}

\step{Step 14: Adding subsets of the index set.}

\begin{claim} \label{p-set}
Let $\alpha < 2^\lambda$, $\ma \in \ap_\alpha$, $A \subseteq I$ be
given. Then there is $\mb \in \ap_{\alpha + 1}$ such that either $A
\in \fil\lp\de \cup \mb\rp$ or $I \setminus A \in \fil\lp\de \cup \mb\rp$.
\end{claim}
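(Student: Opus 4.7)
Let $\mathcal{F} = \fil\lp \de \cup \ma \rp$. My plan hinges on a case analysis: if either $A \in \mathcal{F}$ or $I \setminus A \in \mathcal{F}$, we can take $\mb = \ma$ (using $\ap_\alpha \subseteq \ap_{\alpha+1}$) and are done. Otherwise, both $A$ and $I \setminus A$ must be compatible with $\mathcal{F}$ in the sense that each intersects every $X \in \mathcal{F}$ nontrivially mod $\de$: if $X \cap A = \emptyset \mod \de$ for some $X \in \mathcal{F}$, then $X \subseteq I \setminus A \mod \de$, hence $I \setminus A \in \mathcal{F}$ since $\mathcal{F}$ is closed under supersets mod $\de$, contradicting the case hypothesis. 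Without loss of generality, work with $A$.

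By Fact \ref{good-dense}, $\fin_s(\gee_*)$ is dense in $\mcp(I)/\de$, so since $A \neq \emptyset \mod \de$ there exists $f \in \fin(\gee_*)$ with $A_f \subseteq A \mod \de$. The key refinement is to arrange $\dom(f) \cap \gee_\ma = \emptyset$; given $|\gee_*| = 2^\lambda \gg |\gee_\ma|$ and the $\de$-independence of $\gee_*$, one should be able to replace any $\gee_\ma$-valued coordinates of $f$ with fresh coordinates outside $\gee_\ma$ without leaving $A$ mod $\de$, possibly via a sub-case split according to whether $A$ is already approximable from below by a $\gee_\ma$-atom compatible with $\mathcal{F}$ (which we would then add directly). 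Once $\dom(f) \cap \gee_\ma = \emptyset$, compatibility of $A_f$ with $\mathcal{F}$ is automatic: any $X \in \mathcal{F}$ is supported by $\gee_\ma$ and nonempty mod $\de$, so contains an atom $A_h$ with $h \in \fin(\gee_\ma)$; the disjointness of $\dom(f)$ and $\dom(h)$ combined with the goodness of $(I, \de, \gee_*)$ yields $A_f \cap A_h = A_{f \cup h} \neq \emptyset \mod \de$, whence $A_f \cap X \neq \emptyset \mod \de$.

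Setting $\mb = (\gee_\ma \cup \dom(f), \mca_\ma \cup \{A_f\})$, the cardinality and containment clauses of the approximation definition follow from $|\dom(f)| < \aleph_0$ together with the inductive bounds on $\ma$; the support clause holds because $A_f$ is supported by $\dom(f) \subseteq \gee_\mb$ while the sets in $\mca_\ma$ remain supported by $\gee_\ma \subseteq \gee_\mb$; the properness clause is the compatibility of $A_f$ with $\mathcal{F}$ just established. Since $A_f \in \mca_\mb$ and $A_f \subseteq A \mod \de$, we obtain $A \in \fil\lp \de \cup \mb \rp$ as required. The main obstacle is the disjoint-domain refinement: a direct application of density may produce $f$ with $\dom(f)$ meeting $\gee_\ma$, and circumventing this is where the cardinality gap $|\gee_*| = 2^\lambda > |\gee_\ma|$ and the $\de$-independence of $\gee_*$ become essential, presumably via the sub-case split mentioned above that handles separately the "$\gee_\ma$-supported" and "generic" parts of $A$.
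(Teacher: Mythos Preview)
Your approach has a genuine gap at the ``disjoint-domain refinement.'' The replacement you propose is impossible in general: if $A = g^{-1}(0)$ for some $g \in \gee_\ma$, then any $f \in \fin(\gee_*)$ with $A_f \subseteq A \mod \de$ must have $g \in \dom(f)$, since otherwise $A_f \cap g^{-1}(1) = A_{f \cup \{(g,1)\}} \neq \emptyset \mod \de$ by independence, contradicting $A_f \subseteq g^{-1}(0) \mod \de$. More seriously, even granting some $A_f \subseteq A \mod \de$, there is no reason $A_f$ should be compatible with $\mathcal{F}$: compatibility of $A$ with $\mathcal{F}$ only says each $X \in \mathcal{F}$ meets $A$, not that it meets every atom below $A$. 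Concretely, with $\mca_\ma = \{g^{-1}(0)\}$ and $A$ meeting both $g^{-1}(0)$ and $g^{-1}(1)$ nontrivially mod $\de$, density may well return $f$ with $A_f \subseteq A \cap g^{-1}(1) \mod \de$, and then $A_f$ is disjoint from $g^{-1}(0) \in \mathcal{F}$. Your proposed sub-case split does not address this, and since $|\mca_\ma|$ may be as large as $|\alpha| + \lambda$ you cannot simply intersect $A$ with all generators of $\mathcal{F}$ before applying density.

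The paper avoids this entirely by adding $X$ itself (whichever of $A$, $I \setminus A$ satisfies $\emptyset \notin \fil\lp\de \cup \ma \cup \{X\}\rp$) to $\mca_\mb$, rather than an atom below it. Properness (clause (f)) is then immediate from the choice of $X$. The only remaining issue is the support clause (e), since $X$ need not be supported by $\gee_\ma$. Here density is used not to find a single atom below $X$ but to find a partition $\langle A_{f_i} : i < \mu \rangle$ of $\mcp(I)/\de$ each of whose pieces lies in $X$ or in $I \setminus X$ mod $\de$; setting $\gee_\mb = \gee_\ma \cup \bigcup_i \dom(f_i)$ then supports $X$ while keeping $|\gee_\mb| \leq |\alpha| + \lambda$.
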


\begin{proof}
Let $X = A$ if $\emptyset \notin \fil\lp\de \cup \ma \cup \{A\}\rp$,
otherwise let $X = I \setminus A$. By the hypothesis that $(I,
\de, \gee_*)$ is $(\lambda, \mu)$-good, we may choose a partition
$\langle A_{f_i} : i < \mu \rangle$ of $\mcp(I)/\de$ supporting $X$ such that each
$f_i \in \fin(\gee_*)$. Let $\gee_\mb = \gee_\ma \cup \bigcup \{
\dm(f_i) : i < \mu \}$, and let $\mca_\mb = \mca_\ma \cup \{ X \}$.
This suffices.
\end{proof}

\br \noindent\textbf{Proof of Theorem \ref{t:rg}.} We now prove the
theorem.

\begin{proof} (of Theorem \ref{t:rg})
Without loss of generality, $|M| \leq 2^\lambda$.

Let $\langle C_\alpha : \alpha < 2^\lambda \rangle$ enumerate
$\mcp(I)$. Let $\langle \overline{h}^\alpha, \eta^\alpha : \alpha < 2^\lambda
\rangle$ enumerate all $\eta \in {^\lambda 2}$ and 
all $\overline{h} = \langle h_i : i < \lambda \rangle$ with $h_i \in {^IM}$, 
with each such pair appearing $2^\lambda$ times in the enumeration.

We build the ultrafilter by induction on $\alpha \leq 2^\lambda$.
That is, we choose $\ma_\alpha \in \ap_\alpha$ by induction on
$\alpha < 2^\lambda$ such that:
\begin{enumerate}
\item $\beta < \alpha \implies \ma_\beta \leq \ma_\alpha$
\item if $\alpha = 2\beta + 1$ then either $C_\beta \in \fil\lp\de \cup \ma_\alpha\rp$
or $I \setminus C_\beta \in \fil\lp\de \cup \ma_\alpha\rp$
\item if $\alpha = 2\beta + 2$ and $(\ma_{2\beta + 1}, \overline{h}^\beta, \eta^\beta)$ satisfies
Claim \ref{p-claim}$(A)$, then there is $\overline{B}$ which, along
with $\ma_\alpha$, satisfies Claim \ref{p-claim}$(B)$.
\item if $\alpha$ is a limit ordinal then $\ma_\alpha$ is the least upper bound of
$\{ \ma_\beta : \beta < \alpha \}$ in the natural partial order,
i.e. given by taking the union in both coordinates.
\end{enumerate}

The odd inductive steps are given by Claim \ref{p-set} above, and
clearly ensure that $\fil\lp\de \cup \ma_{2^\lambda}\rp$ is an
ultrafilter. The even inductive steps are given by Claim \ref{p-claim}
above, and Corollary \ref{p-cor} ensures saturation.

Thus letting $\de_1 = \fil\lp\de \cup \ma_{2^\lambda}\rp$, we complete
the proof.
\end{proof}

\br
\br
\section*{Appendix}

We include some definitions used mainly in \S \ref{intro:tr}. For details, see \cite{MiSh:996} \S 1. 

\begin{defn} \label{mu-defn} \emph{(Shelah \cite{Sh:c} Definition III.3.5)}
Let $\de$ be an ultrafilter on $\lambda$.
\[  \mu(\de) :=  \operatorname{min} \left\{ \rule{0pt}{15pt}
\prod_{t<\lambda}~ n_t /\de  ~: ~ n_t < \aleph_0, ~\prod_{t<\lambda}
~n_t/\de \geq \aleph_0 \right\} \]
be the minimum value of the product of an unbounded sequence of cardinals
mod $\de$.
\end{defn}

\begin{defn} \emph{(Good for equality, Malliaris \cite{mm5})}
Let $\de$ be a regular ultrafilter. Say that $\de$ is \emph{good for equality}
if for any set $X \subseteq N = M^I/\de$,
$|X| \leq |I|$, there is a distribution $d: X \rightarrow \de$ such that $t \in \lambda, t \in d(a) \cap d(b)$ implies that 
$(M \models a[t] = b[t]) \iff (N \models a = b)$.
\end{defn}

\begin{defn} \emph{(Lower cofinality, $\lcf(\kappa, D)$)} \label{lcf}
Let $D$ be an ultrafilter on $I$ and $\kappa$ a cardinal. Let $N = (\kappa, <)^I/\de$.
Let $X \subset N$ be the set of elements above the diagonal embedding of $\kappa$.
We define $\lcf(\kappa, D)$ to be the cofinality of $X$ considered with the reverse order.
\end{defn}

\begin{defn} \emph{(Good ultrafilters, Keisler \cite{keisler-1})} 
\label{good-filters}
The filter $\de$ on $I$ is said to be \emph{$\mu^+$-good} if every $f: \fss(\mu) \rightarrow \de$ has
a multiplicative refinement, where this means that for some $f^\prime : \fss(\mu) \rightarrow \de$,
$u \in \fss(\mu) \implies f^\prime(u) \subseteq f(u)$, and $u,v \in \fss(\mu) \implies
f^\prime(u) \cap f^\prime(v) = f^\prime(u \cup v)$.

Note that we may assume the functions $f$ are monotonic.

$\de$ is said to be \emph{good} if it is $|I|^+$-good.
\end{defn}

\begin{rmk} \label{r:good}
The importance of good filters here arises from Keisler's observation that when $\de$ is regular, $A \subseteq N := M^\lambda/\de$, $p \in S(A)$, 
$|A| \leq \lambda$, 
$p = \{ \vp_i(x;\overline{a}_i) : i < \lambda \}$
then $p$ is realized in $N$ iff there is some multiplicative $f: \fss(\lambda) \rightarrow \de$ which refines the ``existential'' map
$ \sigma \mapsto \{ t \in \lambda ~:~ M \models \exists x \bigwedge_{i \in \sigma} \vp_i(x;\overline{a}_i[t]) \}$.
\end{rmk}

\end{document}